\tikzstyle arrowstyle=[scale=1.5]
\tikzstyle directed=[postaction={decorate,decoration={markings, mark=at position .9 with {\arrow[arrowstyle]{stealth}}}}]
\tikzstyle reverse directed=[postaction={decorate,decoration={markings, mark=at position .9 with {\arrowreversed[arrowstyle]{stealth};}}}]
\newtheorem{Theorem}{Theorem}[section]
\newtheorem{Corollary}{Corollary}[section]
\newtheorem{Lemma}{Lemma}[section]
\def\emptyset{\mbox{{\rm \O}}}
\newenvironment{proof}{
\noindent {\bf Proof.}\rm}%
{\mbox{}\hfill\rule{0.5em}{0.809em}\par}
\tikzstyle{vertex}=[circle, draw, inner sep=0pt, minimum size=6pt]
\begin{document}

\title{\LARGE{\textbf{Sufficient conditions for closed-trailable in digraphs\thanks{This work was supported by
 the Natural Science Foundation of Xinjiang (No. 2020D04046) and the National Natural Science Foundation of China (No. 12261016, No. 11901498, No. 12261085).}}}}
\author{{ Changchang Dong$^{a}$, Hong Yang$^{b}$, Jixiang Meng$^{b}$, Juan Liu$^{c}$\footnote{Corresponding
author.
E-mail:  liujuan1999@126.com}}
\\
\small a. Department of Mathematical Sciences, Tsinghua University, Beijing 100084, China
\\
\small b. College of Mathematics and System Sciences, Xinjiang University, Urumqi 830046, China  \\
\small  c. College of Big Data Statistics, Guizhou University of Finance and Economics, Guiyang 550025, China  \\}

\date{}
\maketitle

\begin{abstract}
A digraph $D$ with a subset $S$ of $V(D)$ is called $\boldsymbol{S}${\bf -strong} if for every pair of distinct
vertices $u$ and $v$ of $S$, there is a $(u, v)$-dipath and a $(v, u)$-dipath in $D$.
 We define a digraph $D$ with a subset $S$ of $V(D)$ to be $\boldsymbol{S}${\bf -strictly strong}
 if
 there exist
 two nonadjacent
vertices $u,v\in S$ such that $D$ contains a closed ditrail through the vertices $u$ and $v$;
and define a subset $S\subseteq V(D)$ to be {\bf closed-trailable} if $D$ contains a closed ditrail
through all the vertices of $S$.
In this paper, we prove that for a digraph $D$ with $n$ vertices and a subset $S$ of $V(D)$,
 if $D$ is $S$-strong and if
 $d(u) + d(v)\geq 2n -3$ for any two nonadjacent vertices $u,v$ of $S$, then $S$ is closed-trailable.
 This result generalizes the theorem of Bang-Jensen et al. \cite{BaMa14} on supereulerianity.
Moveover, we show that for a digraph $D$ and a subset $S$ of $V(D)$,
 if $D$ is $S$-strictly strong and if
 $\delta^0(D\langle S\rangle)\geq\alpha'(D\langle S\rangle)>0$, where $\delta^0(D\langle S\rangle)$ is
 the minimum semi-degree of $D\langle S\rangle$ and $\alpha'(D\langle S\rangle)$ is the matching number of $D\langle S\rangle$, then $S$ is closed-trailable.
 This result generalizes the theorem of Algefari et al. \cite{AlLa15} on supereulerianity.
\end{abstract}

\vskip 0.1 in

\noindent {\small {\bf Key words}. closed-trailable, degree condition, minimum semi-degree, matching number }
\section{Introduction}

We consider strict digraphs and simple graphs.
We refer the reader to
\cite{BaGu09} for digraphs and \cite{BoMu08} for graphs for undefined terms and notations.
In this paper, we
define $[k]=\{1,2,\ldots ,k\}$ for an integer $k>0$;
 use $(u,v)$ to denote an arc oriented from a vertex
$u$ to a vertex $v$; and use $[u, v]$ to denote an arc which is either $(u, v)$ or $(v, u)$. When $[u, v] \in A(D)$, we say
that $u$ and $v$ are adjacent.
We often write {\bf dipaths} for directed paths, {\bf dicycles} for directed cycles and {\bf ditrails} for directed trails in digraphs.
The {\bf length} of a ditrail is the number of its arcs.
If a ditrail $T$ from $w$ to $z$, we may call it $(w,z)$-ditrail $T$ or denote it by $T_{[w,z]}$, and say
$w$
is the {\bf initial vertex} of $T$ and $z$ is the {\bf terminal vertex} of $T$.
We often 
 use $K_n^*$ to represent the {\bf complete digraph} with $n$ vertices.
 For a graph $G$, a digraph $D$ is called a {\bf biorientation} of $G$ if $D$ is obtained from $G$ by replacing each edge $xy$ of $G$ by either $(x,y)$ or $(y,x)$ or the pair $(x,y)$ and $(y,x)$. Recall that a {\bf semicomplete digraph} is a biorientation of a complete graph.
For a subset $X\subseteq V(D)$, the subdigraph {\bf induced} by $X$ is the digraph
$D\langle X\rangle = (X,A')$, where $A'$ is the set of arcs in $A(D)$ which have both end vertices in $X$.
For a subset $A'\subseteq A(D)$, the subdigraph {\bf arc-induced} by $A'$ is the digraph
$D\langle A'\rangle = (V',A')$, where $V'$ is the set of vertices in $V(D)$ which are incident with at least one arc from $A'$.

Let $T=v_{1} v_{2} \cdots v_{k}$ denote a ditrail.
For any $1 \leq i \leq j \leq k$, we use $T_{[v_{i}, v_{j}]}=v_{i} v_{i+1} \cdots v_{j-1} v_{j}$ to denote the {\bf sub-ditrail} 
of $T$. Likewise, if $Q=u_{1} u_{2} \cdots u_{k} u_{1}$ is a closed ditrail, then for any $i, j$ with $1 \leq i<j \leq k, Q_{[u_{i}, u_{j}]}$ denotes the sub-ditrail $u_{i} u_{i+1} \cdots u_{j-1} u_{j}$. If $T^{\prime}=w_{1} w_{2} \cdots w_{k^{\prime}}$ is a ditrail with $v_{k}=w_{1}$ and $V(T) \cap V\left(T^{\prime}\right)=\left\{v_{k}\right\}$, then we use $T T^{\prime}$ or $T_{[v_{1}, v_{k}]} T^{\prime}_{[v_{k}, w_{k^{\prime}}]}$ to denote the ditrail $v_{1} v_{2} \cdots v_{k} w_{2} \cdots w_{k^{\prime}}$. If $V(T) \cap V\left(T^{\prime}\right)=\emptyset$ and there is a dipath $z_{1} z_{2} \cdots z_{t}$ with
$z_{2}, \cdots, z_{t-1} \notin V(T) \cup V\left(T^{\prime}\right)$ and with $z_{1}=v_{k}$ and $z_{t}=w_{1}$, then we use $Tz_{1} \cdots z_{t} T^{\prime}$ to denote the ditrail $v_{1} v_{2} \cdots v_{k} z_{2} \cdots z_{t} w_{2} \cdots w_{k^{\prime}}$. In particular, if $T$ is a $(v, w)$-ditrail of a digraph $D$ and $(u ,v), (w, z) \in A(D)-A(T)$, then we use $u v T w z$ to denote the $(u, z)$-ditrail $D\langle A(T) \cup\{(u ,v), (w, z)\}\rangle$. The subdigraphs $u v T$ and $T w z$ are similarly defined.

For a digraph $D$, $a\in A(D)$ and a subdigraph $S$ of $D$,
we use $D-S$ to denote the subdigraph $D\langle V(D)-V(S)\rangle$,
use $D-a$ to denote the subdigraph $D\langle A(D)-\{a\}\rangle$,
and use $D+a$ to denote the subdigraph $D\langle A(D)+\{a\}\rangle$.
Let $D_{1}$ and $D_{2}$ be two digraphs, the \emph{union} $D_{1} \cup D_{2}$ of $D_{1}$ and $D_{2}$ is a digraph with vertex set $V\left(D_{1} \cup D_{2}\right)=V\left(D_{1}\right) \cup V\left(D_{2}\right)$ and arc set $A\left(D_{1} \cup D_{2}\right)=A\left(D_{1}\right) \cup A\left(D_{2}\right)$.

A digraph $D$ is {\bf strong} if for every pair of distinct
vertices $u$ and $v$ of $D$, there is a $(u, v)$-dipath and a $(v, u)$-dipath in $D$.
Following \cite{LiFS07}, a digraph $D$ with a subset $S$ of $V(D)$ is called $\boldsymbol{S}${\bf -strong} if for every pair of distinct
vertices $u$ and $v$ of $S$, there is a $(u, v)$-dipath and a $(v, u)$-dipath in $D$.
We define a digraph $D$ with a subset $S$ of $V(D)$
to be $\boldsymbol{S}${\bf -strictly strong}
 if
 there exist
 two nonadjacent
vertices $u,v\in S$ such that $D$ contains a closed ditrail through the vertices $u$ and $v$.
Clearly, if $D$ is strong, then $D$ is $S$-strong; but if $D$ is $S$-strong, then $D$ is not necessarily strong.

Let $\kappa(G), \kappa'(G), \alpha(G)$, and $\alpha'(G)$ denote the connectivity, the
edge connectivity, the independence number, and the matching number of a graph $G$; and
$\kappa(D)$ and $\lambda(D)$ denotes the {\bf vertex-strong connectivity} and the {\bf arc-strong connectivity} of a
digraph $D$, respectively. As it is implied by Corollary 5.4.3 of \cite{BaGu09}, we have $\lambda(D)\geq\kappa(D)$.
Let $UG(D)$ denote the {\bf underlying graph} of $D$.
 The {\bf independence
number} and the {\bf matching number} of a digraph $D$ are defined as
$\alpha(D) = \alpha(UG(D))$ and $\alpha'(D) = \alpha'(UG(D))$,
respectively.

Following \cite{BaGu09}, for $S, T \subseteq V(D)$, define \[(S, T)_D = \{(s,t) \in A(D)\::\; s \in S, t \in T\}.\]
Let $N_D^+(v) =
\{u \in V(D)-v: (v,u) \in A(D)\}$, $N_D^-(v) = \{u \in V(D)-v:
(u,v) \in A(D)\}$ and $N_D(v) =N_D^+(v)\cup N_D^-(v) $ denote the {\bf out-neighbourhood}, the {\bf in-neighbourhood} and the {\bf neighbourhood} of $v$ in $D$.
Let $d_D^-(v)=|N_D^-(v)|,d_D^+(v)=N_D^+(v)$ and $d_D(v) = d_D^-(v)+d_D^+(v)$
 denotes, respectively, the {\bf in-degree}, the {\bf out-degree} and the {\bf degree}
 of a vertex $v\in V(D)$.
 Let $\delta^+(D) = min\{d^+_D (v): v \in V(D)\},\delta ^-(D) = min\{d^-_D (v): v \in V(D)\}$ and $\delta^0(D) = min\{\delta^+(D),\delta^-(D)\}$ denotes, respectively, the {\bf minimum in-degree}, the {\bf minimum out-degree} and the {\bf minimum semi-degree} 
 of a digraph $D$.
  For a subdigraph $H$ of $D$ and a vertex $v\in V(D)$, define $N_H^+(v) =
\{u \in V(H)-v: (v,u) \in A(D)\}$, $N_H^-(v) = \{u \in V(H)-v:
(u,v) \in A(D)\},N_H(v) =N_H^+(v)\cup N_H^-(v), d_H^-(v)=|N_H^-(v)|,d_H^+(v)=|N_H^+(v)|$ and $d_H(v) = d_H^-(v)+d_H^+(v)$.
When $H\subseteq V(D)$, similarly define $N_H^+(v),N_H^-(v),N_H(v),d_H^-(v),d_H^+(v)$ and $d_H(v)$.
 For a subset
$X\subseteq V(D)$, define $N_D(X) =\cup_{x\in X}N_D(x)\setminus X$.
When the digraph $D$ is understood from the context, we often omit the subscript $D$.



Let $M$ be a matching in
a graph $G$, a path $P$ is an
$\boldsymbol{M}${\bf -augmenting path}
 if the edges of $P$ are alternately in $E(G)- M$ and in $M$, and if both
end vertices of $P$ are not in $V(M)$. The following theorem is fundamental.
\begin{Theorem}\label{M-P iff} (\cite{Ber57})
A matching $M$ in $G$ is a maximum matching if and only if $G$
contains no $M$-augmenting paths.
\end{Theorem}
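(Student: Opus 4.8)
The plan is to prove both directions of the equivalence by contraposition, with the symmetric difference of two matchings as the central device. For the forward implication I would establish: if $G$ contains an $M$-augmenting path, then $M$ is not a maximum matching. Write such a path as $P = v_0 v_1 \cdots v_{2k+1}$. By the definition of an $M$-augmenting path, its edges alternate, starting and ending outside $M$, so $v_0v_1, v_2v_3, \ldots, v_{2k}v_{2k+1} \in E(G)-M$ ($k+1$ edges) while $v_1v_2, v_3v_4, \ldots, v_{2k-1}v_{2k} \in M$ ($k$ edges), and moreover $v_0, v_{2k+1} \notin V(M)$. Let $M' = M \triangle E(P)$, obtained from $M$ by removing its $k$ edges lying on $P$ and inserting the $k+1$ edges of $P$ not in $M$. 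A short check shows $M'$ is a matching: each internal vertex of $P$ is still incident with exactly one chosen edge, each endpoint of $P$ becomes incident with exactly one, and vertices off $P$ are untouched. Hence $|M'| = |M| - k + (k+1) = |M| + 1$, so $M$ is not maximum.

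For the reverse implication, suppose $M$ is not maximum and fix a matching $N$ with $|N| > |M|$; I would analyse the spanning subgraph $H$ of $G$ with edge set $M \triangle N$. Every vertex of $H$ meets at most one edge of $M$ and at most one edge of $N$, so $H$ has maximum degree at most $2$; consequently each component of $H$ is a path or a cycle whose edges alternate between $M$ and $N$. Every cycle component then contains equally many edges of $M$ and of $N$, and every path component contains numbers that differ by at most one. Since $|N \setminus M| > |M \setminus N|$, some component $C$ has strictly more $N$-edges than $M$-edges; by the previous sentence $C$ must be a path that begins and ends with an $N$-edge. Its two ends are therefore unmatched by $M$, and its edge sequence reads $N, M, N, \ldots, N$, exactly the alternation required of an $M$-augmenting path. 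Thus $G$ contains an $M$-augmenting path, completing the contrapositive.

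Most of the work here is bookkeeping, and the only step needing genuine care is the structural analysis of $H = M \triangle N$: that bounded degree forces a decomposition into $M$-$N$ alternating paths and even cycles, and that the strict inequality $|N| > |M|$ then pins down a path component with an excess of $N$-edges. Once that component is identified, reading off the augmenting path is immediate, and the forward direction's exchange argument is entirely elementary.
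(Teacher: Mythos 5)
Your argument is the classical symmetric-difference proof of Berge's theorem and it is correct; note that the paper itself states this result only as a citation to Berge (1957) and supplies no proof of its own, so there is nothing internal to compare against. Both directions are sound: the exchange $M' = M \triangle E(P)$ in the forward direction, and the decomposition of $H = M \triangle N$ into alternating paths and even cycles in the reverse direction, are exactly the standard route. The one place where you should spell out a little more is the sentence ``its two ends are therefore unmatched by $M$'': an endpoint $v$ of the surplus path component meets an edge of $N \setminus M$ in $H$, and if $v$ were also covered by some $e \in M$, then either $e \in M \cap N$ (giving $v$ two incident $N$-edges, contradicting that $N$ is a matching) or $e \in M \setminus N \subseteq E(H)$ (giving $v$ degree $2$ in $H$, contradicting that $v$ is an end of its component). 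With that two-case check inserted, the proof is complete.
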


 A digraph $D$
 is {\bf supereulerian} if
it has a spanning eulerian subdigraph, or equivalently, $D$ contains a spanning closed ditrail.
A set $S$ of vertices of a graph (digraph) $G$ is said to be {\bf cyclable} in $G$ if $G$ contains a cycle (dicycle)
through all the vertices of $S$. This definition was first introduced by Ota in \cite{Ota95}.
 In \cite{LiFS07}, Li et al. proved the following for cyclability in digraphs. 
\begin{Theorem}\label{LiFS07} (\cite{LiFS07})
Let $D$ be a digraph with $n$ vertices and $S\subseteq V (D)$. If $D$ is $S$-strong and if $d(u) + d(v)\geq 2n -1$ for any two
nonadjacent vertices $u, v$ of $S$, then $S$ is cyclable in $D$.
\end{Theorem}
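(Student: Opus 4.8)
The plan is to argue by contradiction, adapting the Bondy--Thomassen proof of Meyniel's theorem to the setting in which connectivity and degrees are controlled only on $S$. Assume $|S|\ge 2$ (the case $|S|\le 1$ being degenerate) and that $S$ is not cyclable in $D$. First I would record that $D$ has a dicycle meeting $S$: for $u,v\in S$ a $(u,v)$-dipath together with a $(v,u)$-dipath (both exist because $D$ is $S$-strong) is a closed walk through $u$, and a closed walk through a vertex always contains a dicycle through that vertex. Hence I may choose a dicycle $C=v_1v_2\cdots v_\ell v_1$ of $D$ with $V(C)\cap S\ne\emptyset$ for which $|V(C)\cap S|$ is maximum and, subject to that, $\ell=|V(C)|$ is maximum. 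Since $S$ is not cyclable, $Z:=S\setminus V(C)\ne\emptyset$, while also $V(C)\cap S\ne\emptyset$.

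The technical engine would be an $S$-relative version of the Bondy--Thomassen insertion/rotation lemma: whenever $R=w_0w_1\cdots w_r$ is a dipath with $w_0,w_r\in V(C)$, $w_1,\dots,w_{r-1}\notin V(C)$ and $r\ge 2$, the maximality of $C$ (first on $|V(C)\cap S|$, then on $\ell$) must forbid rerouting $C$ through the internal vertices of $R$ without dropping a vertex of $V(C)\cap S$. Unwinding this yields the usual adjacency restrictions: writing $w_0=v_a$, $w_r=v_b$, one cannot have $v_b=v_{a+1}$; more generally, along the arc of $C$ that would be short-circuited, no consecutive pair $v_p,v_{p+1}$ can simultaneously be an in-neighbour and an out-neighbour of a common internal vertex of $R$, and chords of $C$ are restricted in the same way. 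The consequences I want to extract are that any $x\in V(D)\setminus V(C)$ lying on such a $C$-to-$C$ dipath satisfies $d_C^+(x)+d_C^-(x)\le\ell$, and that two distinct off-cycle vertices lying on one common $C$-to-$C$ dipath obey joint restrictions on $d_C$ of the same type.

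Next I would produce a nonadjacent pair in $S$ to which the hypothesis $d(x)+d(y)\ge 2n-1$ applies. Using $S$-strongness, fix $w\in V(C)\cap S$; for $z\in Z$ a $(z,w)$-dipath together with a $(w,z)$-dipath yields, after trimming to the first and last meetings with $C$, a $C$-to-$C$ dipath through $z$, so $z$ is of the restricted type above and $d_C(z)\le\ell$. I would then pass to a terminal strong component $H$ of the off-cycle digraph $D-V(C)$, take a longest dipath $P=x_1x_2\cdots x_p$ in $H$, and use maximality of $P$ together with terminality of $H$ (and $S$-strong connectivity back into $H$) to confine the relevant out-neighbourhood of $x_p$ and in-neighbourhood of $x_1$ to $V(P)\cup V(C)$, while the rotation lemma controls how $P$, $x_1$, $x_p$ touch $C$. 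When $x_1$ and $x_p$ turn out to be nonadjacent vertices of $S$, combining the $C$-degree bounds with the confinement of their off-cycle neighbourhoods to $V(P)$ gives, by a count in the spirit of Bondy--Thomassen, $d(x_1)+d(x_p)\le 2n-2$, contradicting $d(x_1)+d(x_p)\ge 2n-1$.

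The main obstacle is exactly this last bookkeeping, which is tight: in Meyniel's theorem the value $2n-1$ is sharp, so there is no slack, and the $S$-relativity must be arranged never to waste the single available unit. Concretely I expect to need separate treatment of the cases where $|Z|=1$, where the vertices of $Z$ are pairwise adjacent, or where the natural extremal endpoints $x_1,x_p$ of $P$ fail to lie in $S$ or turn out to be adjacent; in each of these I would instead pair a vertex $z\in Z$ with a vertex $v_i\in V(C)\cap S$ nonadjacent to $z$ — such a $v_i$ must exist, since otherwise $z$ could be inserted next to some $v_i$, contradicting the maximality of $|V(C)\cap S|$ — and bound $d_C(v_i)+d_C(z)$ by the rotation lemma and $d_{D-C}(v_i)+d_{D-C}(z)$ crudely, again landing at $2n-2$. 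Pinning down the precise $S$-relative insertion lemma (a rerouting is always available unless certain arcs are absent, and the rerouting never destroys a vertex of $V(C)\cap S$) is the heart of the matter; everything else is degree arithmetic built on top of it.
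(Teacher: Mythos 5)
This statement is quoted from Li, Flandrin and Shu; the paper gives no proof of it, so the only internal point of comparison is the proof of the $2n-3$ closed-ditrail analogue in Section~2, whose architecture (a closed ditrail $Q$ maximizing $|V(Q)\cap S|$, a separate treatment of the case where $D\langle S\rangle$ is semicomplete, a $Q$-to-$Q$ detour through an omitted vertex of $S$, and a final degree count against a nonadjacent partner) is indeed the one you are gesturing at. But as written your text is a plan, not a proof, and what you defer is exactly where such arguments live or die. The ``$S$-relative insertion/rotation lemma'' is never stated, let alone proved; you only list the conclusions you ``want to extract'' from it. Since $2n-1$ is sharp there is no slack in the final count, so ``a count in the spirit of Bondy--Thomassen'' cannot be taken on faith: one must check that the insertion obstructions survive the weaker maximality condition (maximal $|V(C)\cap S|$ first, length second), because a rerouting that increases $|V(C)\cap S|$ is forbidden even if it shortens $C$, while one preserving $|V(C)\cap S|$ is forbidden only if it lengthens $C$ --- and these give different adjacency restrictions than in Meyniel's theorem, precisely the restrictions your degree arithmetic depends on.

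A second concrete gap: the fallback claim that some $v_i\in V(C)\cap S$ nonadjacent to $z$ must exist ``since otherwise $z$ could be inserted next to some $v_i$'' is false as stated. Adjacency of $z$ to every vertex of $V(C)\cap S$ only says that for each such $v_i$ at least one of $(z,v_i),(v_i,z)$ is an arc; insertion requires a consecutive pair $v_j,v_{j+1}$ of $C$ (not of $V(C)\cap S$) with both $(v_j,z)$ and $(z,v_{j+1})$ present. In particular, the case in which $z$ is adjacent to every other vertex of $S$ --- where the hypothesis $d(u)+d(v)\ge 2n-1$ gives no information about $z$ at all --- is not covered by your outline; the paper's Section~2 proof must handle the corresponding situation ($D\langle S\rangle$ semicomplete) by a separate shortest-detour argument rather than by degree counting, and an analogous case is unavoidable here. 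Until the insertion lemma is proved in its $S$-relative form and the tight count is actually executed in each of the cases you enumerate, this remains an outline rather than a proof.
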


As an extension of cyclability,
we introduce the notion of closed-traceability.
A set $S$ of vertices of a digraph $D$ is said to be {\bf closed-trailable} if $D$ contains a closed ditrail
through all the vertices of $S$.
Clearly, if $S = V (D)$ is closed-trailable, then $D$ is supereulerian.

The following are two results on supereulerian digraphs. The first is due to Bang-Jensen et al. \cite{BaMa14} and the second is due to Algefari et al. \cite{AlLa15}.

\begin{Theorem}\label{BaMa14 2n-3} (\cite{BaMa14})
If a strong digraph $D$ with $n$ vertices satisfying
$d(u)+d(v)\geq 2n-3$ for any two nonadjacent
vertices $u$ and $v$, then $D$ is supereulerian.
\end{Theorem}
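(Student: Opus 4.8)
To prove Theorem~\ref{BaMa14 2n-3}, I would argue by contradiction via a largest closed ditrail, following the Meyniel-type strategy behind Theorem~\ref{LiFS07} but working with ditrails in place of dicycles. First, if $D$ is semicomplete then, being strong, it is Hamiltonian by a classical theorem of Moon, and a Hamiltonian dicycle is a spanning closed ditrail; so assume $D$ has a pair of nonadjacent vertices, and suppose, for contradiction, that $D$ is not supereulerian. Since $D$ is strong it contains a dicycle, hence a closed ditrail; choose a closed ditrail $T$ with $|V(T)|$ maximum and, among those, $|A(T)|$ maximum. Put $U=V(T)$, $k=|U|\ge 2$, $W=V(D)\setminus U$; then $W\ne\emptyset$, and $T$, being a closed ditrail, is a strongly connected subdigraph of $D$.

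Maximality of $T$ yields two surgery moves. \emph{Merging}: if $Q$ is a closed ditrail with $A(Q)\cap A(T)=\emptyset$ whose vertex set meets both $U$ and $W$, then $T\cup Q$ is weakly connected with every vertex balanced, hence a closed ditrail on $V(T)\cup V(Q)\supsetneq V(T)$, a contradiction. \emph{Rerouting}: if $(x,y)\in A(T)$ and $R$ is an $(x,y)$-ditrail with $A(R)\cap A(T)=\emptyset$, at least one internal vertex, and all internal vertices in $W$, then $D\langle A(T)-\{(x,y)\}\rangle\cup R$ is a closed ditrail on strictly more vertices, a contradiction. From these I would deduce the key bound. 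For $w\in W$ let $I(w)$ (resp. $O(w)$) be the set of $z\in U$ from which $w$ is reachable (resp. which is reachable from $w$) along a dipath with all internal vertices in $W$; both are nonempty as $D$ is strong, they are disjoint (else Merging applies to a closed ditrail inside a closed walk $z\leadsto w\leadsto z$ through $W$), and $A(T)$ has no arc from $I(w)$ to $O(w)$ (else Rerouting applies). Since $T$ is strongly connected, a shortest $T$-dipath from $I(w)$ to $O(w)$ has length $\ge 2$ and its internal vertices lie in $U\setminus(I(w)\cup O(w))$, so $|I(w)|+|O(w)|\le k-1$; as $N_U^-(w)\subseteq I(w)$ and $N_U^+(w)\subseteq O(w)$, this gives $d_U(w)\le k-1$, whence $d(w)\le (k-1)+d_W(w)\le 2n-k-3$ for every $w\in W$. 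The same reasoning applied to a dicycle $C$ of $D\langle W\rangle$ shows the analogous sets satisfy $|I(C)|+|O(C)|\le k-1$.

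Now two cases. If some $z\in U$ has no neighbour in $W$, then $d(z)=d_U(z)\le k-1$ and, for any $w\in W$, the vertices $z$ and $w$ are nonadjacent with $d(z)+d(w)\le (k-1)+(2n-k-3)=2n-4<2n-3$, a contradiction; this disposes in particular of $|W|=1$. So assume $N_D(W)=U$. If $D\langle W\rangle$ is strongly connected (necessarily $|W|\ge 2$), then no $z\in U$ has both an in- and an out-neighbour in $W$ — otherwise, with $(z,w),(w',z)\in A(D)$ and $w\leadsto w'$ inside $D\langle W\rangle$, the closed ditrail $z\to w\leadsto w'\to z$ meets $U$ and $W$ and is arc-disjoint from $T$, contradicting Merging — so $U=A\sqcup B$ with $A=\{z:N_W^-(z)\ne\emptyset=N_W^+(z)\}$ and $B$ symmetric, both nonempty since $D$ is strong; now $T$ contains an arc $(b,a)$ with $b\in B$, $a\in A$, and a walk $b\to w'\leadsto w''\to a$ inside $D\langle W\rangle$ reroutes $(b,a)$, a contradiction. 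If $D\langle W\rangle$ is acyclic it has no digon, so $d_W(w)\le |W|-1$ and $d(w)\le n-2$ for every $w\in W$; hence $W$ is a clique in $UG(D)$ (otherwise a nonadjacent pair in $W$ has degree sum $\le 2n-4$), i.e. $D\langle W\rangle$ is a transitive tournament. The transitive-tournament sub-case, and the general case where $D\langle W\rangle$ is neither strong nor acyclic (reduced to the previous ones through the condensation of $D\langle W\rangle$), are treated by a further degree analysis using the surgery moves, the dicycle-absorption bound, and $W$-ears running from a $U$-in-neighbour of a source of $D\langle W\rangle$ through a chunk of $W$ to a $U$-out-neighbour of a sink.

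The first two paragraphs and the case $U\not\subseteq N_D(W)$ are routine; the heart of the proof is the case $N_D(W)=U$. Bundling the surgery moves into a clean multi-insertion lemma for closed ditrails, and then organising the degree bookkeeping — over the condensation of $D\langle W\rangle$, and correctly handling digons and long dicycles inside $W$ — so as to exhibit in every remaining configuration either a legal Merging/Rerouting or a nonadjacent pair of degree sum at most $2n-4$, is where I expect the main difficulty to lie. The gap between the $2n-3$ here and the $2n-1$ of the Hamiltonicity results (Theorem~\ref{LiFS07}) is exactly the slack that the extra flexibility of trails over dicycles buys.
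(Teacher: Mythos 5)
Your proposal is an outline rather than a proof, and the part you defer is exactly the part that carries the theorem. The paper does not prove Theorem~\ref{BaMa14 2n-3} directly but obtains it as the case $S=V(D)$ of Theorem~\ref{S 2n-3}, whose proof shares your starting point (a closed ditrail $Q$ maximizing the number of covered vertices, plus merging/rerouting surgeries) but then diverges: instead of classifying the structure of $D\langle W\rangle$ via its condensation, it constructs an external $(x,y)$-ditrail $T$ through a missed vertex $s$ with $V(T)\cap V(Q)=\{x,y\}$, chooses $T$ so that the segment $P$ of $Q$ from $x$ to $y$ is as short as possible, shows $s$ is nonadjacent to every internal vertex of $P$, locates a vertex $s_b$ on $P$ that cannot be absorbed into the complementary segment $Q_{[y,x]}$, and then bounds $d_Q(s)+d_Q(s_b)$ and $d_{D-Q}(s)+d_{D-Q}(s_b)$ (via Lemma~\ref{notSx}) to reach $d(s)+d(s_b)\le 2n-4$. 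That single explicit production of a contradicting nonadjacent pair is what your sketch lacks: your last paragraph concedes that the case $N_D(W)=U$ with $D\langle W\rangle$ a transitive tournament, and the general case handled through the condensation of $D\langle W\rangle$, are ``treated by a further degree analysis'' that you never supply, and it is precisely there that the theorem lives; I see no way to close it along the lines you indicate without essentially reinventing the segment-minimization argument.

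There is also a concrete error in a branch you call routine. For $z\in U$ with no neighbour in $W$ you assert $d(z)=d_U(z)\le k-1$; but the bound $d_U(\cdot)\le k-1$ came from the disjointness of $I(w)$ and $O(w)$, which is only available for vertices \emph{off} the trail. A vertex $z\in U$ can have a digon to every other vertex of $U$, so all you get is $d(z)\le 2(k-1)$, whence $d(z)+d(w)\le 2n+k-5\ge 2n-3$ for $k\ge 2$ and no contradiction follows. So even the case you dispose of as easy is open as written, and in particular the sub-case $|W|=1$ is not actually settled.
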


\begin{Theorem}\label{AlLa15} (\cite{AlLa15})
If a strong digraph $D$ satisfying
$\lambda(D)\geq \alpha'(D)$, then $D$ is supereulerian.
\end{Theorem}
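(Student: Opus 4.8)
The plan is to run an extremal argument over the \emph{eulerian subdigraphs} of $D$, i.e., the subdigraphs that are weakly connected and in which every vertex has equal in- and out-degree; any such subdigraph contains a closed ditrail through all of its vertices. At least one exists, since $\lambda(D)\ge\alpha'(D)\ge 1$ forces $D$ to be strong, hence to contain a dicycle. Fix an eulerian subdigraph $H$ of $D$ with $|V(H)|$ as large as possible and set $U=V(H)$; it suffices to prove that $U=V(D)$, since then $H$ is a spanning closed ditrail. Throughout I will use the elementary fact that the arc-disjoint union of two eulerian subdigraphs sharing a vertex is again an eulerian subdigraph.

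Suppose, for contradiction, that $W:=V(D)\setminus U\ne\emptyset$. First I would distill a ``return dipath'' from strong connectivity: contracting $U$ to a single vertex $\tilde u$ yields a strong digraph on at least two vertices, so $\tilde u$ lies on a (shortest) dicycle there, and lifting this dicycle back to $D$ produces a dipath $Q=b\,w_1\cdots w_t\,a$ with $b,a\in U$, $t\ge 1$, all $w_i\in W$, and $A(Q)\cap A(H)=\emptyset$ (every arc of $H$ is inside $U$, while $Q$ uses no arc inside $U$). If $b=a$, then $Q$ is a dicycle meeting both $U$ and $W$ and arc-disjoint from $H$, so $H\cup Q$ is an eulerian subdigraph with more vertices than $H$ --- a contradiction; hence assume $b\ne a$. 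The crucial sub-goal is then to find a dipath $R$ from $a$ to $b$ in $D-A(H)-A(Q)$, because $H\cup Q\cup R$ would be balanced (the in-excess at $a$ and the out-excess at $b$ created by $Q$ are undone by $R$, while every other vertex keeps its in- and out-degrees equal), weakly connected, and would cover $U\cup\{w_1,\dots,w_t\}\supsetneq U$, again contradicting the maximality of $H$.

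Everything thus reduces to the case in which $D-A(H)-A(Q)$ contains \emph{no} $(a,b)$-dipath, i.e., there is a set $X$ with $a\in X$, $b\notin X$ and every arc of $D$ leaving $X$ lying in $A(H)\cup A(Q)$. Since $\lambda(D)\ge\alpha'(D)=:k$, this out-cut has at least $k$ arcs; as $Q$ is a simple dipath running from $b\notin X$ to $a\in X$ it accounts for only a controlled number of them, so $H$ must supply many arcs leaving $X$, and all of those run strictly inside $U$, between $X\cap U$ and $U\setminus X$. The step I expect to demand the most care --- and which I see as the heart of the matter --- is to convert this abundance of cut-arcs into a contradiction: because $H$ is a \emph{trail}, its arcs crossing the cut fall into a bounded number of alternating excursions, and I would combine this with the non-extendability of $H$ (with $\lambda(D)\ge\alpha'(D)$ playing the role of ``the cut is at least as large as a maximum matching'') to build an $M$-augmenting path in $UG(D\langle U\rangle)$ for a maximum matching $M$, contradicting Berge's theorem (Theorem~\ref{M-P iff}) since $M$ would then not be maximum. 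For the bookkeeping I would take $X$ inclusion-minimal among all such obstructing sets, so that $Q$ and the trail $H$ each cross its boundary as cleanly as possible. With this last contradiction, $U\subsetneq V(D)$ is impossible, $U=V(D)$, and $D$ is supereulerian.
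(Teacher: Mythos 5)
This statement is quoted from \cite{AlLa15}; the present paper does not reprove it, but instead derives a strengthening (the Corollary closing Section 3, via Theorem \ref{min-semi matching}) by a purely matching-theoretic route: Lemmas \ref{not M} and \ref{MX jiegou} pin down the structure of $D\langle S\rangle$ around a maximum matching $M$ under the semi-degree condition $\delta^0\geq m$, and the extremal closed ditrail is then enlarged by inserting digons $\{(u,v),(v,u)\}$ whose existence those lemmas guarantee; no arc cuts or Menger-type counting appear anywhere. Your set-up --- a vertex-maximal eulerian subdigraph $H$ with $U=V(H)$, the excursion $Q=b\,w_1\cdots w_t\,a$ obtained by contracting $U$, and the reduction to finding a return $(a,b)$-dipath $R$ in $D-A(H)-A(Q)$ --- is sound as far as it goes, and the observation that $H\cup Q\cup R$ would again be a larger eulerian subdigraph is correct.

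There is, however, a genuine gap exactly where the hypothesis $\lambda(D)\geq\alpha'(D)$ must be cashed in. You reduce to an obstructing set $X$ with $a\in X$, $b\notin X$ and $(X,V(D)-X)_D\subseteq A(H)\cup A(Q)$, note that this out-cut has at least $\alpha'(D)$ arcs, and then announce that you ``would'' convert the cut arcs of $H$ into an $M$-augmenting path --- but no mechanism for doing so is supplied, and I do not see one. Two concrete problems. First, the number of arcs of $Q$ leaving $X$ is not actually ``controlled'': a dipath from $b\notin X$ to $a\in X$ may cross the cut outward many times, and inclusion-minimality of $X$ does not obviously bound this, so the claimed abundance of $H$-arcs in the cut is not established. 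Second, and more seriously, you propose to contradict the maximality of a matching of $UG(D\langle U\rangle)$, whereas the hypothesis involves $\alpha'(D)=\alpha'(UG(D))$, the matching number of the whole digraph; a matching of $D\langle U\rangle$ failing to be maximum \emph{in $D\langle U\rangle$} contradicts nothing, and no link is given between arcs of the closed trail $H$ crossing an arc cut and alternating paths for a maximum matching of $UG(D)$. As written, the argument stops at the point where the real work must begin, so the proposal does not constitute a proof.
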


The purpose of this paper is to analyze two sufficient conditions for closed-trailable subsets in digraphs.
In Section 2,
we prove that for a digraph $D$ with $n$ vertices and a subset $S$ of $V(D)$,
 if $D$ is $S$-strong and if
 $d(u) + d(v)\geq 2n -3$ for any two nonadjacent vertices $u,v$ of $S$, then $S$ is closed-trailable in $D$.
 This result generalizes the Theorem \ref{BaMa14 2n-3}.
 In Section 3,
we show that for a digraph $D$ and a subset $S$ of $V(D)$,
 if $D$ is $S$-strictly strong and if
 $\delta^0(D\langle S\rangle)\geq\alpha'(D\langle S\rangle)>0$, then $S$ is closed-trailable in $D$.
 This result generalizes the Theorem \ref{AlLa15}.

\section{Degree condition}
The following lemma will be needed in this section.
\begin{Lemma}\label{notSx}
Let $D$ be a digraph, $T=u_1u_2\cdots u_h$ be a ditrail in $D$ and $x\in V(D)$.
If $D$ does not contain a $(u_1,u_h)$-ditrail with vertex set $V(T)\cup \{x\}$, then $d_T(x) \leq |V(T)|$.
\end{Lemma}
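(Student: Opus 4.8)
The plan is to argue by contradiction via a counting argument on the vertices $x$ contributes its arcs to. Suppose $d_T(x) > |V(T)|$; I will construct a $(u_1,u_h)$-ditrail with vertex set $V(T)\cup\{x\}$, contradicting the hypothesis. Write $T = u_1 u_2 \cdots u_h$, so $|V(T)| = h$ (note a ditrail may repeat vertices, but as written the indices $1,\dots,h$ label the vertex-occurrences; I will treat the occurrences carefully). The key observation is that every arc of $D$ incident with $x$ and a vertex of $T$ is either of the form $(u_i, x)$ or $(x, u_i)$ for some index $i$. For each such arc we get a potential ``insertion point'': if both $(u_i, x)\in A(D)$ and $(x, u_{i+1})\in A(D)$ for some $1\le i \le h-1$, then we may splice $x$ into $T$ between $u_i$ and $u_{i+1}$, obtaining the ditrail $u_1\cdots u_i\, x\, u_{i+1}\cdots u_h$, whose vertex set is exactly $V(T)\cup\{x\}$ and whose endpoints are still $u_1$ and $u_h$ — the desired contradiction.

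So it suffices to show that if $d_T(x) > h$, then such an index $i$ exists. Consider the $h-1$ consecutive pairs $(u_i, u_{i+1})$ for $i = 1,\dots,h-1$, together with a bookkeeping device for the two ends $u_1$ and $u_h$. For each $i\in\{1,\dots,h-1\}$, say the pair is \emph{good} if $(u_i,x)\in A(D)$ and $(x,u_{i+1})\in A(D)$. If no pair is good, I want to bound $d_T(x)$. The bound comes from the following: for each $i$, the pair being not good means we do \emph{not} have both $u_i\in N^-_T(x)$ and $u_{i+1}\in N^+_T(x)$ simultaneously; equivalently, at least one of ``$u_i\notin N^-_T(x)$'' or ``$u_{i+1}\notin N^+_T(x)$'' holds. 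Counting incidences: $d_T(x) = |N^-_T(x)| + |N^+_T(x)|$ where these are multisets over the occurrences $u_1,\dots,u_h$. Each occurrence $u_j$ contributes at most $2$ (once to $N^-$, once to $N^+$). The ``no good pair'' condition forces, for each of the $h-1$ internal slots, a loss of at least one unit from the maximum possible $2h$, and one checks the two endpoint occurrences $u_1$ (which can only contribute via $u_1\in N^+_T(x)$ relevantly for slot $1$) and $u_h$ similarly give the remaining slack, yielding $d_T(x) \le 2h - (h-1) - 1 = h$. This is the contradiction.

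I expect the main obstacle to be handling the case where $T$ repeats vertices, so that $x$ may be adjacent to several \emph{occurrences} of the same vertex; the cleanest fix is to phrase everything in terms of the arc set $A(T)$ and the sequence of occurrences rather than the vertex set, noting that splicing $x$ into \emph{any} good slot works, and that the degree count $d_T(x)$ (which counts arcs of $D$, hence is a count over distinct vertices of $T$, not occurrences) is actually $\le 2|V(T)|$ to begin with — so the real content is squeezing out the extra factor using the consecutive-slot structure along the trail. A secondary subtlety: if $(u_i,x)$ and $(x,u_j)$ exist with $i\ge j$ we cannot naively splice, which is exactly why the argument must be organized around \emph{consecutive} pairs $(u_i,u_{i+1})$ along $T$ rather than arbitrary pairs; I will make sure the counting is set up so that only consecutive good pairs are needed, and that the absence of all of them is what caps $d_T(x)$ at $|V(T)|$.
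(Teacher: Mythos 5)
There is a genuine gap. Your entire argument rests on the claim that if no \emph{consecutive} pair is good (i.e., there is no index $i$ with both $(u_i,x)\in A(D)$ and $(x,u_{i+1})\in A(D)$), then $d_T(x)\le h$. That claim is false. Take $T=u_1u_2\cdots u_h$ a dipath and suppose $(u_i,x)\in A(D)$ for every $i$ and $(x,u_1)\in A(D)$, with no other arcs between $x$ and $V(T)$. Then no consecutive pair is good, since $(x,u_{i+1})\notin A(D)$ for every $i\in\{1,\dots,h-1\}$, yet $d_T(x)=h+1>|V(T)|$. The extra unit of ``endpoint slack'' you invoke to get from $2h-(h-1)$ down to $h$ simply is not there in this configuration. (A secondary issue: your target bound is $h$, the number of occurrences, whereas the lemma asserts the bound $|V(T)|$, which is smaller when the trail repeats vertices.)

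The missing idea is that a ditrail may repeat \emph{vertices} as long as it does not repeat \emph{arcs}, so you are not restricted to splicing $x$ between two consecutive occurrences: if both $(u_i,x)$ and $(x,u_i)$ lie in $A(D)$ for a single vertex $u_i$, you can insert the two-arc detour $u_i\,x\,u_i$ at that occurrence, obtaining the $(u_1,u_h)$-ditrail $u_1\cdots u_i\,x\,u_i\cdots u_h$ with vertex set $V(T)\cup\{x\}$ (the two new arcs are not in $A(T)$ because $x\notin V(T)$). This handles exactly the configuration above and is the whole content of the paper's proof: the hypothesis forces $|\{(u_i,x),(x,u_i)\}\cap A(D)|\le 1$ for each vertex $u_i\in V(T)$, whence $d_T(x)\le|V(T)|$ immediately, with no counting over consecutive slots needed. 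Your consecutive-pair splicing is a valid construction but is strictly weaker and cannot close the argument on its own.
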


\begin{proof}
As $D$ does not contain a $(u_1,u_h)$-ditrail with vertex set $V(T)\cup \{x\}$, we have $x\in V(D)-V(T)$ and
$|\{(u_i,x),(x,u_i)\}\cap A(D)|\leq 1$ for any $u_i\in V(T)$. Accordingly, we obtain $d_T(x)\leq |V(T)|$ as required.
\end{proof}

\begin{Theorem}\label{S 2n-3}
Let $D$ be a digraph with $n$ vertices and $S\subseteq V (D)$. If $D$ is $S$-strong and if $d(u) + d(v)\geq 2n -3$ for any two
nonadjacent vertices $u, v$ of $S$, then $S$ is closed-trailable in $D$.
\end{Theorem}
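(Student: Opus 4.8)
The plan is to argue by contradiction with an extremal closed ditrail, using $S$-strongness to reach a missing vertex of $S$ and Lemma~\ref{notSx}, together with an ``excursion'' move special to closed ditrails, to turn non-extendability into degree inequalities. We may assume $|S|\ge 2$. A closed ditrail meeting $S$ exists: for $u,v\in S$, a $(u,v)$-dipath followed by a $(v,u)$-dipath is a closed walk through $u$, and a shortest closed walk through $u$ has no repeated arc. So we may choose a closed ditrail $Q$ with $|V(Q)\cap S|$ maximum and, among those, with $|A(Q)|$ maximum, and assume for contradiction that $x\in S\setminus V(Q)$. Two elementary remarks drive the argument. First, for no $u\in V(Q)$ can both $(u,x)$ and $(x,u)$ be arcs of $D$: otherwise inserting the excursion $u\,x\,u$ at an occurrence of $u$ on $Q$ gives a closed ditrail through $V(Q)\cup\{x\}$, contradicting the choice of $Q$; hence $d_{V(Q)}(x)\le |V(Q)|$. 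More generally, if $x$ lies together with some $v\in V(Q)$ on a closed ditrail $C$ with $A(C)\cap A(Q)=\emptyset$ and $V(C)\cap V(Q)=\{v\}$, then $A(Q)\cup A(C)$ is a weakly connected subdigraph with in-degree equal to out-degree everywhere and containing $V(Q)\cup\{x\}$, hence carries an Eulerian closed ditrail — the same contradiction; so no such $C$ exists.

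Next I would use $S$-strongness to produce the detour. Since $V(Q)\cap S\neq\emptyset$, there is a shortest dipath $P^{-}$ from $V(Q)$ to $x$, starting at $a\in V(Q)$, and a shortest dipath $P^{+}$ from $x$ to $V(Q)$, ending at $b\in V(Q)$; minimality makes these internally disjoint from $V(Q)$ and disjoint from $A(Q)$ at their ends. One may assume $a\neq b$ (if $a=b$, pruning $P^{-}P^{+}$ yields either the forbidden $C$ above or a closed ditrail through $x$ disjoint from $V(Q)$, and the latter is handled by the same kind of argument applied to that disjoint ditrail and a pair of connecting dipaths). Traversing $Q$ from $a$ through $b$ as $Q=Q_{[a,b]}Q_{[b,a]}$: neither piece can be extended by $x$, since re-attaching the complementary piece would give a closed ditrail through $V(Q)\cup\{x\}$; so Lemma~\ref{notSx} applies to each piece, and, combined with the analogous statements for the sub-ditrails obtained by splicing $P^{\pm}$ into $Q$, it bounds how often $x$ meets $V(Q)$ and controls how $x$ meets $D-V(Q)$. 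Finally: if some $v\in S$ is nonadjacent to $x$, one derives a contradiction with $d(x)+d(v)\ge 2n-3$ (if $v\in S\setminus V(Q)$, then $v$ satisfies the same upper bound as $x$; otherwise the detour structure forces $v\in S\cap V(Q)$ to be ``far'' from $x$); and if $x$ is adjacent to every vertex of $S$, one shows directly that the propagation ``$(u,x)\in A(D)\Rightarrow(u',x)\in A(D)$ for the $Q$-successor $u'$ of $u$'' — valid for arcs $(u,u')$ of $Q$ by the non-extendability above — forces $x$ to have no out-arc, or no in-arc, into $V(Q)\cap S$, contradicting $S$-strongness, once the non-$S$ vertices of $Q$ to which $x$ is non-adjacent have been routed around.

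The step I expect to be hardest is making the degree bound on $d(x)$ strong enough to yield $2n-3$ rather than the weaker cyclability bound $2n-1$ of Theorem~\ref{LiFS07}: the saving of $2$ has to be extracted exactly from the fact that a closed ditrail, unlike a dicycle, may revisit vertices — which is precisely what makes the excursion move and Lemma~\ref{notSx} available — and converting this into a clean inequality will require choosing the cut points of $Q$ and the detour dipaths $P^{\pm}$ optimally, and perhaps an auxiliary extremal argument inside $D-V(Q)$ and on the vertices reachable to and from $x$ outside $Q$. The remaining difficulty is bookkeeping: each time a cycle or detour through $x$ is spliced into $Q$ one must check that all arcs stay distinct and that no vertex of $V(Q)$, in particular of $S\cap V(Q)$, is lost, which forces a precise description of how $P^{-}$, $P^{+}$, and $Q$ intersect.
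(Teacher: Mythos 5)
Your setup matches the paper's: choose a closed ditrail $Q$ maximizing $|V(Q)\cap S|$, take $s\in S\setminus V(Q)$, use $S$-strongness to build a detour from $Q$ through $s$ back to $Q$, and invoke the ``excursion'' observation (Lemma~\ref{notSx}) to bound $d_{V(Q)}(s)$. But the proposal stops exactly where the real work begins, and you say so yourself: getting from the non-extendability of $Q$ to a degree sum of at most $2n-4$ for some \emph{nonadjacent pair in $S$} is the whole theorem, and your sketch does not contain the idea that makes it go through. For an arbitrary nonadjacent $v\in S\cap V(Q)$ the only generic bound is $d_Q(v)\le 2(|V(Q)|-1)$, and combined with $d_Q(s)\le |V(Q)|$ and $d_{D-Q}(s)+d_{D-Q}(v)\le 2(n-|V(Q)|-1)$ this gives $d(s)+d(v)\le 2n+|V(Q)|-4$, which is no contradiction at all. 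The saving has to come from choosing $v$ with great care, and nothing in your outline pins that choice down.

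The paper's choice is the following, and it is not optional. Among all detours $T$ from $x\in V(Q)$ through $s$ to $y\in V(Q)$ with $V(T)\cap V(Q)=\{x,y\}$, pick one minimizing the length of the $Q$-segment $P=Q_{[x,y]}$ that $T$ bypasses. Minimality forces $d_W(s)=0$ for the set $W$ of internal vertices of $P$ (any arc between $s$ and $W$ would shorten the bypass), so $s$ is nonadjacent to \emph{every} vertex of $W$; and $W\cap S\neq\emptyset$, since otherwise $Q_{[y,x]}T$ already beats $Q$. One then takes a maximal initial block $s_1,\dots,s_{b-1}$ of $W\cap S$ that can be absorbed into a $(y,x)$-ditrail on $V(Q_{[y,x]})$ and applies Lemma~\ref{notSx} to the resulting ditrail $T_2$ and the first unabsorbable vertex $s_b$, giving $d_Q(s)+d_Q(s_b)\le 2|V(Q)|-2$; the outside contribution is then $\le 2(n-|V(Q)|-1)$ because a common relay vertex would extend $Q$. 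Summing gives $2n-4$. Your proposal has neither the minimal-bypass choice (which manufactures the nonadjacent partner $s_b$ rather than taking one as given), nor the absorption/Lemma~\ref{notSx} bound on $d_Q(s_b)$, nor the preliminary argument (the paper's Case~2, with its partition into $L,L_1,L_2,R$) that a detour with the required intersection pattern exists when $N_Q(s)=\emptyset$. The semicomplete-$D\langle S\rangle$ case is also not closed: your ``propagation'' claim along $Q$-successors is not justified as stated, whereas the paper handles this case by a separate shortening argument exploiting that $s$ is adjacent to every $s'\in W\cap S$. As it stands the proposal is a correct frame with the load-bearing argument missing.
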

\begin{proof}
Suppose, to the contrary, that
$D$ does not contain a closed ditrail which has all the vertices of $S$.
Since $D$ is $S$-strong,
there exists a closed ditrail which contains at least one vertex of $S$ in $D$.
Let $Q=y_0y_1\cdots y_py_{p+1}\cdots y_{q-1}y_0$ be a closed ditrail with $|V(Q)\cap S|$ maximized in $D$.
Let $|V(Q)|=t$
and $s\in S-V (Q)$.

If $D\langle S\rangle$ contains no nonadjacent vertices, that is, $D\langle S\rangle$ is a semicomplete digraph, then $N_Q(s)\not=\emptyset$.
Assume, w.l.o.g., that $N^+_Q(s)\not=\emptyset$.
Then there exists a vertex $u\in V(Q)$ such that $(s,u)\in A(D)$.
Since $D$ is $S$-strong, for any $s'\in V(Q)\cap S$, there exists a dipath from $s'$ to $s$, and consequently a $(u',s)$-dipath $P$
with
$u'\in V(Q)$ and $V (P )\cap V (Q) = \{u'\}$.
 If $u'=u$, then $Q Psu$ is a closed ditrail with $|V(Q Psu)\cap S|>|V(Q)\cap S|$,
a contradiction.
Thus $u'\not=u$. Then we can obtain a $(u',u)$-dipath $T$ through $s$ such that $V(T)\cap V(Q)=\{u',u\}$.
Choose an $(x,y)$-ditrail $T$ through $s$ with $V(T)\cap V(Q)=\{x,y\}$,
such that the length of the ditrail $P$ is minimum, where $P$ is a $(x,y)$-ditrail
which travels along $Q$ from $x$ to $y$.
If $(V(P)-\{x,y\})\cap S=\emptyset$, then $Q_{[y,x]}T$ is a closed ditrail with $|V(Q_{[y,x]} T)\cap S|>|V(Q)\cap S|$,
a contradiction.
Thus $(V(P)-\{x,y\})\cap S\not=\emptyset$.
Let $s'\in V(P)\cap S$.
Since $D\langle S\rangle$ is a semicomplete digraph,
we have $s$ and $s'$ are adjacent.
If $(s,s')\in A(D)$, then we can get another $(x,s')$-dipath $T'$ through $s$ with $V(T')\cap V(Q)=\{x,s'\}$ such that the length of $(x,s')$-dipath $P'$ in $Q$ is less then the length of $P$ in $Q$, contrary to the choice of $T$ above.
So $(s',s)\in A(D)$. Then we can get another $(s',y)$-dipath $T'$ through $s$ with $V(T')\cap V(Q)=\{s',y\}$ such that the length of $(x,s')$-dipath $P'$ in $Q$ is less then the length of $P$ in $Q$, contrary to the choice of $T$ above.

Therefore $D\langle S\rangle$ contains nonadjacent vertices.
Now we prove that $D$ contains an $(x,y)$-ditrail $T$ through $s$ such that $V(T)\cap V(Q)=\{x,y\}$,
 at most one vertex $t$ in the internal vertices of $T$ with $d^+_T(t)=d^-_T(t)=2$, and other vertex $x$ in the internal vertices of $T$ with $d^+_T(x)=d^-_T(x)=1$.
We will distinguish two cases.
 \vskip .2cm
  \noindent {\bf Case 1.}  $N_Q(s)\not=\emptyset$.

In this case, w.l.o.g., $N^+_Q(s)\not=\emptyset$.
Then there exists a vertex $u\in V(Q)$ such that $(s,u)\in A(D)$.
Since $D$ is $S$-strong, for any $s'\in V(Q)\cap S$, there exists a dipath from $s'$ to $s$, and consequently a $(u',s)$-dipath $P$
with
$u'\in V(Q)$ and $V (P )\cap V (Q) = \{u'\}$.
 If $u'=u$, then $Q Psu$ is a closed ditrail with $|V(Q Psu)\cap S|>|V(Q)\cap S|$,
a contradiction.
Thus $u'\not=u$. Then we can obtain a $(u',u)$-dipath $T$ through $s$ such that $V(T)\cap V(Q)=\{u',u\}$.
 \vskip .2cm
  \noindent {\bf Case 2.}  $N_Q(s)=\emptyset$.

Since $D$ is $S$-strong,
for any $x\in V(Q)\cap S$, there exists a dipath from $s$ to $x$ and a dipath from $x$ to $s$, and consequently a $(s,u_1)$-dipath $P_1$
and a $(u_2,s)$-dipath $P_2$
such that
$u_1,u_2\in V(Q),V (P _1)\cap V (Q) = \{u_1\}$ and $V (P _2)\cap V (Q) = \{u_2\}$.
 Let us choose those paths $P_1$ and $P_2$ such that $|V (P_1)|+|V (P_2)|$
is minimum. To simplify further computations, let $W_1=\{x_1,\cdots,x_{p_1}\}$ be the set of internal vertices of $P_1$,
$W_2=\{y_1,\cdots,y_{p_2}\}$ be the set of internal vertices of $P_2$,
$W_1\cap W_2=L$, $W_1-L=L_1$, $W_2-L=L_2$ and $R=V(D)-V(Q)-L-L_1-L_2-\{s\}$.

If $|L|=0$ or $|L|=1$, then we are done.
So assume that $|L|\geq 2$. Then $|W_1|\geq 2$ and $|W_2|\geq 2$.
Let us choose a vertex $s'\in S\cap V (Q)$ which is by hypothesis nonadjacent to $s$.

Since $|V (P_1)|+|V (P_2)|$
is minimum,
 there is no vertex $r\in R$ satisfying
$\{(s , r),(r, s')\}\subseteq A(D)$ or $\{(s' , r),(r, s)\}\subseteq A(D)$.
Accordingly,
\begin{equation} \label{2|R|}
d_{R}(s)+d_{R}(s')\leq 2|R|=2(n-t-|L\cup L_1\cup L_2|-1).
\end{equation}

Obviously,
\begin{equation} \label{02(t-1)}
d_{Q}(s)=0 \mbox{ and } d_{Q}(s')\leq 2(t-1).
\end{equation}

Since $|V (P_1)|+|V (P_2)|$
is minimum,
we can obtain that $d^+_{L_1}(s)=1$ if $x_1\notin L$ and $0$ otherwise, $d^+_{L}(s)=0$ if $x_1\notin L$ and $1$ otherwise.
Obviously, $d^+_{L_2}(s)\leq |L_2|$.
Thus $d^+_{L\cup L_1\cup L_2}(s)\leq |L_2|+1$.
Similarly, $d^-_{L\cup L_1\cup L_2}(s)\leq |L_1|+1$.
Thus
\begin{equation} \label{s|L_1|+|L_2|+2}
d_{L\cup L_1\cup L_2}(s)\leq |L_1|+|L_2|+2.
\end{equation}
By similar arguments,
we can get that
\begin{equation} \label{s'|L_1|+|L_2|+2}
d_{L\cup L_1\cup L_2}(s')\leq |L_1|+|L_2|+2.
\end{equation}
Combining (\ref{2|R|})-(\ref{s'|L_1|+|L_2|+2}) we get
\[d(s)+d(s')\leq 2n-2|L|\leq 2n-4,\]
but, $s$ and $s'$ are not adjacent and $s,s'\in S$, contradiction.
This proves Case 2.

Choose an $(x,y)$-ditrail $T$ through $s$ with $V(T)\cap V(Q)=\{x,y\}$,
 at most one vertex $t$ in the internal vertices of $T$ with $d^+_T(t)=d^-_T(t)=2$, and other vertex $x$ in the internal vertices of $T$ with $d^+_T(x)=d^-_T(x)=1$,
such that the length of the ditrail $P$ is minimum, where $P$ is a $(x,y)$-ditrail
which travels along $Q$ from $x$ to $y$. 
W.l.o.g., write $T=y_0\cdots s\cdots y_{p+1}$, that is $x=y_0,y=y_{p+1}$.
Let $W = \{y_1,y_2,\cdots, y_p\}$ be the set of internal vertices of $P$,
$T_1=Q_{[y_{p+1},y_0]}$.
Then $|T_1|=t-p+c$, where $c= |W\cap T_1|$.

By the choice of $T$, we have $d_W(s)=0$.
Since if $d_W^+(s)>0$ or $d_W^-(s)>0$, w.l.o.g. $d_W^+(s)>0$, then there exists a vertex $y_j\in W$
($j\in [p]$) such that $(s,y_j)\in A(D)$.
But now we can get another $(y_0,y_j)$-dipath $T'$ 
such that the length of $(y_0,y_j)$-dipath $P'$ in $Q$ is less then the length of $P$ in $Q$, contrary to the choice of $T$ above.
Therefore $d_W^+(s)=0$.
The proof for $d_W^-(s)=0$ is similar.
In particular, $s$ and $y_j$ are nonadjacent, for $j\in [p]$.

If $D$ contains a $(y_{p+1},y_0)$-ditrail $Q'$ with vertex set $V(T_1)\cup \{s\}$,
then $Q' Q_{[y_0,y_{p+1}]}$ is a closed ditrail
with $|V(Q' Q_{[y_0,y_{p+1}]})\cap S|>|V(Q)\cap S|$, contrary to the maximality of $Q$.
Thus $D$ does not have a $(y_{p+1},y_0)$-ditrail with vertex set $V(T_1)\cup \{s\}$.
Then from Lemma \ref{notSx}, we get
\begin{equation} \label{Qs}
d_{Q}(s)\leq d_{W}(s)+d_{T_1}(s)\leq 0+|V(T_1)|=t-p+c.
\end{equation}

If $W$ contains no vertex of $S$, then $T_1T$ is closed ditrail with
$|V(T_1 T)\cap S|>|V(Q)\cap S|$, contrary to the choice of $Q$.
Thus $W$ contains at least one vertex of $S$. Let $W'=W\cap S=\{s_1,s_2,\cdots ,s_l\}$, where $s_1,s_2,\cdots ,s_l$ appear in $P$ in order.
If $D$ contains a $(y_{p+1},y_0)$-ditrail $T_2$ with vertex set
$V(T_2)=V(T_1)\cup W'$, then $T_2T$ is a closed ditrail
with $|V(T_2T)\cap S|>|V(Q)\cap S|$, contrary to the maximality of $Q$.
Thus $D$ does not contain a $(y_{p+1},y_0)$-ditrail $T_2$ with vertex set
$V(T_2)=V(T_1)\cup W'$.
Let $b$ be the maximum integer $1\le i\le l $ such that $D$ contains a $(y_{p+1},y_0)$-ditrail $T_2$ with vertex set
$V(T_2)=V(P_1)\cup \{s_0,s_1,s_2,\cdots ,s_{b-1}\}$, where $s_0=y_0$.
Let $|V(T_2)\cap W|=c'$.
Then $c'\ge c$.
Therefore $D$ does not contain a $(y_{p+1},y_0)$-ditrail with vertex set $V(T_2)\cup \{s_b\}$.
Then from
Lemma \ref{notSx}, we have
\begin{equation} \label{Qs'}
d_{Q}(s_b)= d_{W-T_2}(s_b)+d_{T_2}(s_b)\leq 2(p-1-c')+(t-p+c')= t+p-c'-2
\le t+p-c-2.
\end{equation}

By the choice of $T$ and the maximality of $Q$,
there is no vertex $r\in V(D)-V(Q)$ satisfying $\{(s_b,r), (r,s)\}\subseteq A(D)$
or $\{(s,r),(r,s_b)\}\subseteq A(D)$. Accordingly,
\begin{equation} \label{D-Sss'}
d_{D-Q}(s)+d_{D-Q}(s_b)\leq 2(n-t-1).
\end{equation}

Combining (\ref{Qs'})-(\ref{D-Sss'}),
note that $s$ and $s_b$ are nonadjacent vertices of $S$,
 we can obtain that
\[ d(s) + d(s_b)\leq 2n-4,\]
  contrary to the assumption of Theorem \ref{S 2n-3}.
\end{proof}


\section{Minimum semi-degree condition}

We start with a few lemmas that are needed in our proofs.

\begin{Lemma}\label{not M}
Let $D$ be a digraph, $S\subseteq V (D)$, $H=D\langle S\rangle$ and $M$ be a matching of $H$ with $|M| = m>0$. Suppose that $|S-V(M)| =|X|\geq 2$ and for any
$x \in X$, $d_H(x)\geq 2m- 1$. If there exists a vertex $x'\in X$
such that $d_H(x')\geq 2m + 1$, then $M$ is not a maximum matching of $H$.
\end{Lemma}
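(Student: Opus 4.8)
\textbf{Proof proposal for Lemma \ref{not M}.}

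The plan is to argue by contradiction: assume $M$ is a maximum matching of $H$, and use Theorem \ref{M-P iff} (Berge) to deduce that $H$ contains no $M$-augmenting path. We then count carefully how the two unmatched vertices $x'$ (with $d_H(x')\ge 2m+1$) and any second unmatched vertex $x''\in X$ distribute their incident arcs among the $m$ matching edges, and show the degree bound forces an $M$-augmenting path of length $3$, a contradiction. First I would fix $x'\in X$ with $d_H(x')\ge 2m+1$ and a second vertex $x''\in X\setminus\{x'\}$ (which exists since $|X|\ge 2$) with $d_H(x'')\ge 2m-1$. Since $M$ is assumed maximum, neither $x'$ nor $x''$ has a neighbour in the set $X$ of unmatched vertices (an edge between two unmatched vertices would itself be an augmenting path of length $1$); hence all of $N_H(x')$ and all of $N_H(x'')$ lie in $V(M)$.

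Next I would set up the key counting. For each edge $e_i=a_ib_i\in M$ ($i\in[m]$), consider the contribution of $x'$ and $x''$ to the four vertices $\{a_i,b_i\}$: in the underlying graph each of $x',x''$ contributes at most $2$ to the pair $\{a_i,b_i\}$, so $d_H(x')+d_H(x'')$ is split over the $m$ edges with total at most $4m$. Call an edge $e_i$ \emph{heavy} (with respect to the pair $x',x''$) if the combined degree of $x',x''$ into $\{a_i,b_i\}$ is $4$, i.e. $x'$ is adjacent (in $UG(H)$) to both $a_i$ and $b_i$ and so is $x''$. Since $d_H(x')+d_H(x'')\ge (2m+1)+(2m-1)=4m$, every edge must be heavy — or, more precisely, the total is exactly $4m$ and the deficiency at some edge is compensated nowhere, so in fact every $e_i$ satisfies: $x'$ reaches both endpoints of $e_i$ and $x''$ reaches both endpoints of $e_i$. (If one accounts more carefully, $d_H(x')\le 2m$ always, so $d_H(x')=2m$ is forced along with $d_H(x'')=2m$, each being adjacent to both ends of every matching edge; I will phrase the count so this conclusion is clean.) Wait — I should double check the ceiling: $d_H(x')\ge 2m+1$ would then contradict $d_H(x')\le 2m$ unless $x'$ has a neighbour outside $V(M)$, i.e. in $X$; but that neighbour gives a length-$1$ augmenting edge, contradiction. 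Actually this already closes the argument: $d_H(x')\ge 2m+1$ together with $N_H(x')\subseteq V(M)$ and $|V(M)|=2m$ forces $x'$ to have a double arc, which is impossible in a strict digraph; equivalently $d_{UG(H)}(x')\le 2m$, contradiction — unless some neighbour of $x'$ lies outside $V(M)$, which yields the augmenting edge.

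Hmm, that shortcut may be too quick depending on how "degree" interacts with digon arcs, so the robust route is the augmenting-path one, which I now describe. Having shown $x'$ is adjacent to both ends of every matching edge and $x''$ is adjacent to at least one end of every matching edge (this weaker statement already follows from $d_H(x'')\ge 2m-1$ and $N_H(x'')\subseteq V(M)$: it can miss both ends of at most one edge, but then it reaches both ends of all others — in particular it reaches at least one endpoint of every edge except possibly one, and by a parity/counting refinement it reaches both ends of $m-1$ edges). Pick any matching edge $e_i=a_ib_i$ and orient it, say the arc is $(a_i,b_i)$; since $x'$ is adjacent to $a_i$, either $(a_i,x')\in A(D)$ or $(x',a_i)\in A(D)$, and likewise $x'$ is adjacent to $b_i$. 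The point where I expect the real work is bookkeeping the \emph{orientations}: I must produce, among the $m$ matching edges, one edge $e_i$ and an orientation pattern of the arcs joining $\{x',x''\}$ to $\{a_i,b_i\}$ that forms a genuine directed alternating path realising an $M$-augmenting path in the sense used here (closed-ditrail arguments earlier suggest the relevant notion is just the underlying-graph augmenting path of Theorem \ref{M-P iff}, so orientations may be irrelevant and the combinatorial counting above suffices). Concretely: since $|X|\ge 2$ and every matching edge is incident to $x'$ on both ends while $x''$ is incident to some endpoint of $m-1\ge$ (at least one, using $m\ge 1$) edges, choose an edge $e_i$ reached by $x''$ at endpoint $a_i$ and by $x'$ at $b_i$ (possible since $x'$ reaches $b_i$ for every $i$, and $x''$ reaches at least one endpoint of almost every edge — handle the small-$m$ boundary $m=1$ separately, where $d_H(x')\ge 3$ but $|V(M)|=2$ forces a neighbour in $X$ directly). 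Then $x'' a_i b_i x'$ is a path whose middle edge $a_ib_i$ is in $M$ and whose end vertices $x'',x'$ are unmatched: an $M$-augmenting path. By Theorem \ref{M-P iff}, $M$ is not maximum — contradiction. The main obstacle, as flagged, is making the "every matching edge is heavy" counting airtight (pinning down $d_H(x')=2m$, $d_H(x'')=2m$ exactly, and locating an edge that both $x'$ and $x''$ reach at \emph{distinct} endpoints) and cleanly dispatching the $m=1$ base case.
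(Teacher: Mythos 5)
Your proof has a genuine gap, and it is the one you yourself flagged: you treat $d_H(x')$ as the underlying\-/graph degree, but in this paper $d_H(v)=d_H^+(v)+d_H^-(v)$ is the digraph degree, and $H$ may contain digons (both $(u,v)$ and $(v,u)$). Hence a vertex $x'$ with $N_H(x')\subseteq V(M)$ can have $d_H(x')$ as large as $4m$, not $2m$. This kills your ``shortcut'' ($d_H(x')\ge 2m+1$ does \emph{not} force a neighbour outside $V(M)$), and it also kills the counting your ``robust route'' rests on: the sum $d_H(x')+d_H(x'')$ is bounded by $8m$, not $4m$, so $d_H(x')+d_H(x'')\ge 4m$ forces nothing like ``every matching edge is heavy,'' and $d_H(x'')\ge 2m-1$ does not imply $x''$ misses both ends of at most one matching edge. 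Since the existence of the length\-/$3$ augmenting path $x''a_ib_ix'$ is derived from these unsupported claims, the contradiction is never actually reached. (Your $m=1$ base case fails for the same reason: $d_H(x')\ge 3$ with $|V(M)|=2$ is perfectly consistent with $N_H(x')\subseteq V(M)$ via digons.)

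What the correct counting looks like (and what the paper does): from $d_H(x')\ge 2m+1$ and $N_H(x')\subseteq V(M)$ one only gets that \emph{some} edge $e'\in M$ has both endpoints adjacent to $x'$ (otherwise $d_H(x')\le 2\cdot m\cdot 1\cdot 2/2$... more precisely at most one endpoint per edge, each contributing at most $2$ arcs, gives $d_H(x')\le 2m$). The no\-/augmenting\-/path hypothesis then forces $N_H(x'')\subseteq V(M)-\{u',v'\}$, and $d_H(x'')\ge 2(m-1)+1$ yields an edge $e''\in M\setminus\{e'\}$ both of whose endpoints are adjacent to $x''$. Setting $M_{x'}$ and $M_{x''}$ to be the sets of edges doubly adjacent to $x'$ and to $x''$ respectively, absence of augmenting paths makes them disjoint and confines $N_H(x')$ to $V(M_{x'}\cup M')$ and $N_H(x'')$ to $V(M_{x''}\cup M')$ where $M'=M-M_{x'}-M_{x''}$; the bounds $d_H(x')\le 4|M_{x'}|+2|M'|$ and $d_H(x'')\le 4|M_{x''}|+2|M'|$, sharpened by parity to $\ge 2m+2$ and $\ge 2m$, sum to $4m\ge 4m+2$, a contradiction. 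Your instinct to look for a short augmenting path is reasonable, but the conclusion here comes from this global parity count over the partition $M_{x'},M_{x''},M'$, not from exhibiting a single heavy edge.
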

\begin{proof}
Assume, by contradiction, that $M$ is a maximum matching of $H$.
By Theorem \ref{M-P iff}, $H$ has no $M$-augmenting path. Let $x,x' \in X$ be two vertices such that
$d_H(x')\geq 2m + 1$ and $d_H(x)\geq 2m - 1$.

Since $H$ has no $M$-augmenting path, $X$ is an independent set in $H$.
So $x$ and $x'$ are nonadjacent in $H$ and $N_H(x),N_H(x')\subseteq V(M)$.
By $d_H(x')\geq 2m + 1$,
there exists an arc $e' = [u', v']\in M$ such that
$[x', u'], [x', v']\in A(H).$ This together with the fact that $H$ has no $M$-augmenting path,
we obtain that
$[x, u'], [x, v']\not\in A(H)$ and $N_H(x)\subseteq V(M)- \{u', v'\}$. Since
$d_H(x)\geq 2m- 1= 2(m -1) + 1$, there exists an arc $[u''
, v'']\in M\setminus e'$ such that $[x, u''], [x, v'']\in A(H)$.
Define
\[M_{x'} = \{[u', v']\in M: [x', u'], [x', v']\in A(H)\} \mbox{ and } M_x = \{[u'', v'']\in M : [x, u''], [x, v'']\in A(H)\}.\]
Then
\begin{equation} \label{4mx}
d_{M_x'}(x')\leq 4|M_{x'}|\mbox{ and }d_{M_x}(x)\leq 4|M_{x}|.
\end{equation}
Let $M' = M - (M_x'\cup M_x)$, possibly $M' =\emptyset$. Then
\begin{equation} \label{2m'}
d_{M'}(x')\leq 2|M'|\mbox{ and }d_{M'}(x)\leq 2|M'|.
\end{equation}
Since $H$ has no $M$-augmenting path, we have $M_x'\cap M_x =\emptyset$, $N_H(x)=N_M(x)\subseteq V(M'\cup  M_x)$ and $N_H(x')=N_M(x')\subseteq V(M'\cup  M_{x'})$.
Combining this, (\ref{4mx}) and (\ref{2m'}),
 we obtain
$ 2m +1\leq d_H(x')=d_{M_x'}(x')+d_{M'}(x') \leq 4|M_{x'}| + 2|M'|$ and $2m - 1 \leq d_H(x)=d_{M_x}(x)+d_{M'}(x) \leq 4|M_x| + 2|M'|$.
Since $4|M_{x'}| + 2|M'|$ and $4|M_x| + 2|M'|$ are even, we have $4|M_{x'}| + 2|M'|\geq 2m + 2$ and $4|M_x| + 2|M'|\geq 2m$. Then we get that
\[4m = 4|M| = (4|M_{x'}| + 2|M'|) + (4|M_v| + 2|M'|) \geq (2m + 2) + 2m = 4m + 2,\]
a contradiction.
\end{proof}

\begin{Lemma}\label{MX jiegou}
Let $D$ be a digraph, $S\subseteq V (D)$, $H=D\langle S\rangle$ and $M$ be a maximum matching of $S$ with $|M| = m>0$. Suppose that $|S-V(M)| =|X|\geq 2$ and
$\delta^0(H)\geq m$.
  Then
  \begin{equation} \label{x=m}
\begin{split}
d^+_H(x)=d^-_H(x)= m\mbox{ for~any } x \in X
\end{split}
  \end{equation}
  Moreover

if $|X|=|\{x,x'\}|=2$ and $M_x=\frac{m}{2}$ or $M_{x'}=\frac{m}{2}$, where $M_x=\{[u,v]\in M:[u,x]\mbox{ or }[v,x]\in A(H)\},M_{x'}=\{[u,v]\in M:[u,x']~ or~ [v,x']\in A(H)\}$, then 
$D\langle V(M_x) \cup \{x\}\rangle\cong D\langle V(M_{x'}) \cup \{x'\}\rangle\cong K^*_{m+1}$ and
$N_{V(M_x) \cup \{x\}}(V(M_{x'}) \cup \{x'\}) = \emptyset$.
 Otherwise, for any $e = [u, v]\in M$
and for any $x \in X$,
each of the following holds.

(i) There exists exactly one $v(e) \in \{u, v\}$ such that $\{(v(e), x), (x, v(e))\}\subseteq A(D)$ and
 $N_X(u(e))=\emptyset$, where $u(e) \in \{u, v\} - \{v(e)\}$.

(ii) The set $\{u(e) : e \in M\}$ is an independent set in $H$ with
$d_H^+(u(e)) =d_H^-(u(e)) = m$ and $\{(u(e), v(e')),
(v(e'), u(e))\}\subseteq A(D)$ for any $e, e' \in M$ (possibly $e= e'$).
\end{Lemma}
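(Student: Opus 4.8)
The plan is to fix a maximum matching $M$ of $H = D\langle S\rangle$ with $|M| = m > 0$, set $X = S - V(M)$ with $|X| \geq 2$, and use the hypothesis $\delta^0(H) \geq m$. First I would establish \eqref{x=m}: since $M$ is maximum, Theorem~\ref{M-P iff} tells us $H$ has no $M$-augmenting path, so $X$ is an independent set and $N_H(x) \subseteq V(M)$ for every $x \in X$; moreover no edge $e = [u,v]$ of $M$ can have $x$ joined to one endpoint and $x'$ (another vertex of $X$) joined to the other, else we get an $M$-augmenting path $x\,u\,v\,x'$. The degree bound $d_H^+(x), d_H^-(x) \geq m$ forces, together with $N_H(x) \subseteq V(M)$ and $|M| = m$, the equalities $d_H^+(x) = d_H^-(x) = m$; this is the easy part and also shows $x$ is joined (in at least one direction each) to essentially ``half'' of each matching edge's incidences, motivating the definitions of $M_x$ and $u(e), v(e)$.

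Next I would analyze, edge by edge, how $x$ attaches to a given $e = [u,v] \in M$. Because $d_H^+(x) = d_H^-(x) = m = |M|$, a counting argument shows that for \emph{each} $e$, $x$ has exactly one out-neighbour and exactly one in-neighbour among $\{u,v\}$; the question is whether these coincide. Define $v(e)$ to be an endpoint with $\{(v(e),x),(x,v(e))\} \subseteq A(D)$ when such an endpoint exists, and $u(e)$ the other. The no-augmenting-path condition applied across two matching edges, and across an edge and a vertex $x' \in X$, will pin down the structure: if for some $e$ both endpoints fail to be ``fully adjacent'' to $x$ (i.e. $x \to u \to$, $x' $ or the reverse split across $u,v$), one either finds an augmenting path or forces a special configuration. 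I expect the bookkeeping here to separate into the exceptional case $|X| = 2$, $M_x = m/2$ or $M_{x'} = m/2$ — where $x$ (or $x'$) is ``maximally bi-adjacent'' to only half the matching edges — versus the generic case.

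For the exceptional case, I would argue that when $M_x = m/2$, the $m$ out-arcs and $m$ in-arcs of $x$ must all go to the $2\cdot(m/2) = m$ vertices of $V(M_x)$, so $x$ is bi-adjacent to every vertex of $V(M_x)$; no-augmenting-path then forces $V(M_x) \cup \{x\}$ to induce $K^*_{m+1}$ (any missing arc inside, or any arc from $V(M_x)\cup\{x\}$ to $V(M_{x'})\cup\{x'\}$, yields an augmenting path using $x$, $x'$ and the relevant matching edges), and symmetrically for $x'$; disjointness $M_x \cap M_{x'} = \emptyset$ follows because a shared matching edge would again create an augmenting path through both $x$ and $x'$. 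In the generic case I would prove (i) that exactly one $v(e) \in \{u,v\}$ is bi-adjacent to $x$ and $N_X(u(e)) = \emptyset$ — the latter because if $u(e)$ had any neighbour in $X$, combining with $v(e)$'s bi-adjacency to $x$ (and $x \neq $ that neighbour, or handling $x$ itself) produces an augmenting path — and then (ii) that $\{u(e) : e \in M\}$ is independent in $H$ (an arc $[u(e), u(e')]$ together with $v(e)x$ and $xv(e')$, or a short detour, gives an augmenting path), that $d_H^+(u(e)) = d_H^-(u(e)) = m$ (it lies in $X$? no — rather its degree is squeezed by the same maximum-matching counting as for $X$, since it has no $X$-neighbours and $M$ is maximum), and that $\{(u(e), v(e')), (v(e'), u(e))\} \subseteq A(D)$ for all $e, e'$ by yet another augmenting-path exclusion. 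The main obstacle will be the careful case management distinguishing the $K^*_{m+1}$-degenerate situation from the generic one and making every ``else an augmenting path exists'' claim precise — in particular, keeping track of orientations (an $M$-augmenting path in the underlying graph only needs edges, but here one must verify each bracketed arc actually exists in $A(D)$ in the needed sense) and ensuring the endpoints of the putative augmenting paths genuinely lie outside $V(M)$.
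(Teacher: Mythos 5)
Your overall strategy (Berge's theorem, augmenting-path exclusions, and degree counting against $\delta^0(H)\geq m$) is the same as the paper's, but there are concrete gaps at the load-bearing steps. First, the equality \eqref{x=m} does not follow from $N_H(x)\subseteq V(M)$ and $d_H^{+}(x),d_H^{-}(x)\geq m$ alone: $|V(M)|=2m$ only gives $d_H^{+}(x)\leq 2m$, so nothing yet forces $d_H^{+}(x)=m$. The paper needs a separate counting argument (Lemma~\ref{not M}) that uses \emph{two} vertices of $X$ --- this is precisely where the hypothesis $|X|\geq 2$ enters: if some $x'\in X$ had $d_H(x')\geq 2m+1$, it would be bi-adjacent to both endpoints of some $e'\in M$, which (by the absence of $M$-augmenting paths) excludes any other $x\in X$ from $V(e')$, and a parity count over the matching edges fully adjacent to $x$ and to $x'$ then contradicts $|M|=m$. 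Without this, \eqref{x=m} is unproved; and your subsequent assertion that ``for each $e$, $x$ has exactly one out-neighbour and exactly one in-neighbour among $\{u,v\}$'' is false in general --- it fails exactly in the exceptional case you later describe, where $x$ is bi-adjacent to both endpoints of each edge of $M_x$ and to neither endpoint of the remaining edges.

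Second, you never explain why the exceptional ($K^*_{m+1}$) configuration can occur only when $|X|=2$, nor how the generic alternative is actually established. In the paper, the case $|X|\geq 3$ is a separate substantial argument: if some $x_1\in X$ were bi-adjacent to both endpoints of some $e_1\in M$, one produces $x_2,x_3\in X$ and pairwise disjoint sets $M_1,M_2,M_3$ of matching edges fully adjacent to $x_1,x_2,x_3$ respectively, and a degree count on $x_1,x_2$ yields $-2|M_3|\geq 0$, a contradiction; this is what rules out the degenerate structure for three or more unmatched vertices. Likewise, for $|X|=2$ with $|M_x|,|M_{x'}|>m/2$, the crux is showing that every matching edge lies in $M_x\cap M_{x'}$ (the paper's Claim 1.1, itself a nontrivial count), after which the degrees of the ``other'' endpoints $u(e)$ are squeezed by $\delta^0(H)\geq m$ to give (ii). Your sketch names the right obstacles but does not supply these arguments, so as written it is a plan rather than a proof.
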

\begin{proof}
By Theorem \ref{M-P iff} and $M$ is a maximum matching, we have that,
  \begin{equation} \label{M-P}
H\mbox{ does not contain an } M\mbox{-augmenting path. }
 \end{equation}
By (\ref{M-P}), $X$ is an independent set in $H$ and $N_H(X)\subseteq V(M)$. That is $N_H(X)=N_M(X)$.

 Since $M$ is a maximum matching,
it follows by $\delta^0(H)\geq m$ and by Lemma \ref{not M} that (\ref{x=m}) must hold.
For the remaining part of this lemma, we consider two cases. \vskip .2cm
  \noindent {\bf Case 1.} $|X|=|\{x,x'\}|=2$.

Let $M = \{e_1, \cdots , e_m\}$, $M_x=\{[u,v]\in M:[u,x]\mbox{ or }[v,x]\in A(H)\}$ and $M_{x'}=\{[u,v]\in M:[u,x']\mbox{ or } [v,x']\in A(H)\}$. By (\ref{x=m}), $|M_x| \geq \frac{m}{2}$
and $|M_{x'}| \geq\frac{m}{2}$.
If $|M_x| = \frac{m}{2}$
or $|M_{x'}|=\frac{m}{2}$, then $m$ must be even. W.l.o.g., assume
that $M_x = \{e_1, \cdots , e_{\frac{m}{2}}\}$.
By (\ref{x=m}), for any $[u,v]\in M_x$, $\{(u,x),(x,u),(v,x),(x,v)\}\subseteq A(H)$.
By (\ref{M-P}), we have $M_{x'} = \{e_{\frac{m}{2}+1}, \cdots, e_m\}$ and for any $u\in V(M_x)$ and $u' \in V(M_{x'})$, $[u, u']\not\in A(H)$.
Therefore
$N_{V(M_x) \cup \{x\}}(V(M_{x'}) \cup \{x'\}) = \emptyset$.
By $\delta^0(H)\geq m$, we can conclude that $\{(u, v), (v, u)\}\subseteq A(H)$ for any $u,v\in V(M_x\cup \{x'\})$ and
$\{(u', v'), (v', u')\}\subseteq A(H)$ for any $u',v'\in V(M_{x'}\cup \{x'\})$.
Therefore $D\langle V(M_x) \cup \{x\}\rangle\cong D\langle V(M_{x'}) \cup \{x'\}\rangle\cong K^*_{m+1}$.
This proves the case of $|X|=|\{x,x'\}|=2$ and $M_x=\frac{m}{2}$ or $M_{x'}=\frac{m}{2}$.

Now we consider the case of $|M_x| > \frac{m}{2}$
and $|M_{x'}|>\frac{m}{2}$ for $|X|=|\{x,x'\}|=2$.

Then $M_x \cap M_{x'} \not= \emptyset$. Let $M' = M_x \cap M_{x'} = \{e'_i= [u'_i, v'_i]:i \in [d], d\in [m])\}\subseteq M$.
By (\ref{M-P}), we get that for any arc $e'_i= [u'_i, v'_i] \in M'$, exactly one vertex of $e'_i$ can be adjacent to both $x$ and $x'$,
w.l.o.g., assume that
\begin{equation} \label{(A)}
\begin{split}
[u'_i, x], [u'_i, x'] \in A(H)\mbox{ and }[v'_i, x], [v'_i, x'] \not\in A(H)
\mbox{ for~any }e'_i= [u'_i, v'_i] \in M'.
\end{split}
\end{equation}
Then
\begin{equation} \label{(D)}
\begin{split}
\mbox{for any }e'_i = [u'_i, v'_i]\in M', d_{\{u'_i, v'_i\}}(x)\leq 2\mbox{ and }d_{\{u'_i, v'_i\}}(x')\leq 2.
\end{split}
\end{equation}

If $[v'_i, v'_j]\in A(H)$ for $i,j \in [d]$, then $\{[u'_i,x],[u'_i,v'_i],[v'_i, v'_j],[v'_j, u'_j],[u'_j,x']\}$ induce an $M$-augmenting path,
which contradicts (\ref{M-P}).
Thus
\begin{equation} \label{(B)}
\begin{split}
\mbox{the set }\{v'_1, v'_2, \cdots , v'_d\}\mbox{ is an independent set in }H.
\end{split}
\end{equation}

Let $M'' = M - M' =(M_x-M')\cup(M_x'-M')= \{e''_j= [u''_j, v''_j]: j\in[ m - d]\}$.
Then we have the following claim.
\\
  \noindent {\bf Claim 1.1} $d=m$, that is $M_x-M'=\emptyset$ and $ M_x'-M'=\emptyset$.

By contradiction, assume that $d<m$, that is $M''\not=\emptyset$.
W.l.o.g., assume that $M_x-M'\not=\emptyset$.
By the definition of $M_x$ and $ M_{x'}$,
for any $e=[u,v]\in M_x-M'$, $d_{\{u,v\}}(x)\leq 4$, and for any $e'=[u',v']\in M_x'-M'$, $d_{\{u',v'\}}(x')\leq 4$.
If there exists
some $e=[u,v]\in M_x-M'$ such that $[x, u] \in A(H)$ and $[x, v] \not\in A(H)$,
then $d_{\{u,v\}}(x)\leq 2$.
Let $|M_x'-M'|=d'$.
By (\ref{x=m}) and
(\ref{(D)}), we have
\[
2m =d_H( x)= d_M( x)
= d_{M'}(x)
+d_{ \{u, v\}}(x)+ d_{M_x-M'- \{u, v\}}( x)\leq 2d+2+4(m-d-d'-1),
\]
\[
2m =d_H( x')= d_M( x')
= d_{M'}(x')
+ d_{M_{x'}-M'}( x')\leq 2d+4d',
\]
Combining the last two inequalities, we obtain $4m\leq 4m-2$,
a contradiction. Thus
\begin{equation} \label{(E)}
\begin{split}
\mbox{for any }e=[u,v]\in M_x-M', [x, u] \in A(H)
\mbox{ and }[x, v] \in A(H).
\end{split}
\end{equation}
By similar arguments, we can get that
for any $e'=[u',v']\in M_{x'}-M'$, $[x', u'] \in A(H)$
 and $[x', v'] \in A(H).$

If for some $[u'_i, v'_i] \in M'$, $N_H(v'_i)\cap N_{M_x-M'}(v'_i)\not=\emptyset$, w.l.o.g, assume that
$u\in V(M_x-M')$ with $e=[u,v]\in M_x-M'$ such that $[u,v'_i]\in A(H)$, then by (\ref{(E)}),
$\{[x, v],[u, v],[ u,v'_i],[u'_i, v'_i],$\\ $[u'_i,x']\}$ induce an $M$-augmenting path,
which contradicts (\ref{M-P}).
Thus \begin{equation} \label{(F)}
\begin{split}
\mbox{for any }[u'_i, v'_i] \in M', N_H(v'_i)\cap N_{M_x-M'}(v'_i)=\emptyset.
\end{split}
\end{equation}
By similar arguments, we can get that
\begin{equation} \label{(F')}
\begin{split}
\mbox{for any }[u'_i, v'_i] \in M', N_H(v'_i)\cap N_{M_x'-M'}(v'_i)=\emptyset.
\end{split}
\end{equation}

Since $M'\not=\emptyset, d\geq 1$,
 $v'_1$ exists. By (\ref{(A)}), (\ref{(B)}), (\ref{(F)}) and (\ref{(F')}), $N_H(v'_1)\subseteq \{u'_1, u'_2,\cdots , u'_d\}$. Then $d_H(v'_1) \leq 2d$.
  By $\delta^0(H)\geq m$ and $d\leq m-1$,
we have $2m \leq d_H(v'_1)\leq 2d \leq 2(m - 1)$, a contradiction. Thus $d = m$.
This proves Claim 1.1.

(i) By Claim 1.1, we have
$M = M'$.
By the fact that $[u'_i, x], [u'_i, x'] \in A(H),[v'_i, x], [v'_i, x'] \not\in A(H)$ for any $e_i=[u'_i,v'_i]\in M'$
with (\ref{x=m}), we have $(u'_i,x), (u'_i, x'), (x, u'_i ), (x', u'_i) \in A(H)$. This proves (i) for the case of $|X|=\{x,x'\}=2,|M_x| > \frac{m}{2}$
and $|M_{x'}|>\frac{m}{2}$.

(ii) By Claim 1.1 with (\ref{(A)}) and (\ref{(B)}), $N_H(v'_1)\subseteq \{u'_1, u'_2,\cdots , u'_m\}$.
Then by $\delta^0(H)\geq m$, we have that $d^+(v'_i) =
d^-(v'_i) = m$ for any $i\in [m]$ and $(u'_i, v'_{i'} ), (v'_{i'} , u'_i) \in A(H)$ for any $ i, i' \in [m]$.
This proves (ii) for the case of $|X|=\{x,x'\}=2,|M_x| > \frac{m}{2}$
and $|M_{x'}|>\frac{m}{2}$.
 \vskip .2cm
  \noindent {\bf Case 2.} $|X|\geq 3$.

  If $m=1$, then (i) and (ii) are easy to verity.
  Assume that $m=2$.
   If there exists $e_1=[u_1,v_1]\in M$ such that $[u_1,x_1],[x_1,v_1]\in A(H)$ for some $x_1\in X$,
  then by (\ref{M-P}), we have $[u_1,x],[x,v_1]\not\in A(H)$ for any $x\in X-x_1$.
  Since $d^+_H(x)=d^-_H(x)=(m-1)+1$ for any $x\in X-x_1$, we have $[u_2,x],[x,v_2]\in A(H)$ for $e_2=[u_2,v_2]= M-e_1$.
  As $|X|\ge 3$, there are at least two distinct vertices $x_2,x_3\in X-x_1$ such that
  $[u_2,x_2],[x_2,v_2],[u_2,x_3],[x_3,v_2]\in A(H)$.
  Then $\{[u_2,x_2],[u_2,v_2],[x_3,v_2]\}$ induce an $M$-augmenting path, which contradicts (\ref{M-P}).
  If there exists $e=[u,v]\in M$ such that $[u,x_1],[x_1,v]\notin A(H)$ for some $x_1\in X$, then by $d^+_H(x_1)=d^-_H(x_1)=(m-1)+1$,
  there exists $e_1=[u_1,v_1]\in M-e$ such that $[u_1,x_1],[x_1,v_1]\in A(H)$.
  Then by above, we can get a contradiction.
  Thus for any $e=[u,v]\in M$ and any $x\in X$, there exists exactly one $v(e) \in \{u, v\}$ such that $[v(e), x]\in A(D)$ and
 $N_X(u(e))=\emptyset$, where $u(e) \in \{u, v\} - \{v(e)\}$.
 Since $d^+_H(x)=d^-_H(x)= m$ for any $x \in X$,
 $\{(v(e), x), (x, v(e))\}\subseteq A(D)$. This proves (i) for the case of $m=2$.
 By (\ref{M-P}),
the set $\{u(e) : e \in M\}$ is an independent set in $H$.
Then by $\delta^0(H)\geq m$,
$d_H^+(u(e)) =d_H^-(u(e)) = m$ and $\{(u(e), v(e')),
(v(e'), u(e))\}\subseteq A(D)$ for any $e, e' \in M$ (possibly $e= e'$).
This proves (ii) for the case of $m=2$.

 Now we assume that $m\ge 3$.
 If there exists $e_1=[u_1,v_1]\in M$ such that $[u_1,x_1],[x_1,v_1]\in A(H)$ for some $x_1\in X$,
  then by (\ref{M-P}), we have $[u_1,x],[x,v_1]\not\in A(H)$ for any $x\in X-x_1$.
  Since $d^+_H(x)=d^-_H(x)=(m-1)+1$ for any $x\in X-x_1$, there exists $e_2=[u_2,v_2]\in M-e_1$ such that $[u_2,x_2],[x_2,v_2]\in A(H)$
  for some $x_2\in X-x_1$.
  By (\ref{M-P}), for any $x'\in X-x_1-x_2$, $[u_2,x'],[x',v_2]\not\in A(H)$.
  Since $d^+_H(x')=d^-_H(x')=(m-2)+2$ for any $x'\in X-x_1-x_2$, there exists $e_3=[u_3,v_3]\in M-e_1-e_2$ such that $[u_3,x_3],[x_3,v_3]\in A(H)$
  for some $x_3\in X-x_1-x_2$.
  Let $M_i=\{[u,v]\in M:[u,x_i],[v,x_i]\in A(H)\}$ for $i\in[3]$.
  By (\ref{M-P}), any two of $\{M_1,M_2,M_3\}$ are disjoint.
  Let $M'=M-M_1-M_2-M_3$.
  Then we have
  \[m=d^+_H(x_1)=d^+_{M_1}(x_1)+d^+_{M'}(x_1)\le 2|M_1|+m-|M_1|-|M_2|-|M_3|\] and
\[m=d^+_H(x_2)=d^+_{M_2}(x_2)+d^+_{M'}(x_2)\le 2|M_2|+m-|M_1|-|M_2|-|M_3|.\]
Hence we get $-2|M_3|\geq 0$, a contradiction as $|M_3|\ge 1$.
If there exists $e=[u,v]\in M$ such that $[u,x_1],[x_1,v]\notin A(H)$ for some $x_1\in X$, then by $d^+_H(x_1)=d^-_H(x_1)=(m-1)+1$,
  there exists $e_1=[u_1,v_1]\in M-e$ such that $[u_1,x_1],[x_1,v_1]\in A(H)$.
  Then by above, we can get a contradiction.
  Thus for any $e=[u,v]\in M$ and any $x\in X$, there exists exactly one $v(e) \in \{u, v\}$ such that $[v(e), x]\in A(D)$ and
 $N_X(u(e))=\emptyset$, where $u(e) \in \{u, v\} - \{v(e)\}$.
 Since $d^+_H(x)=d^-_H(x)= m$ for any $x \in X$,
 $\{(v(e), x), (x, v(e))\}\subseteq A(D)$. This proves (i) for the case of $m\ge3$.
 By (\ref{M-P}),
the set $\{u(e) : e \in M\}$ is an independent set in $H$.
Then by $\delta^0(H)\geq m$,
$d_H^+(u(e)) =d_H^-(u(e)) = m$ and $\{(u(e), v(e')),
(v(e'), u(e))\}\subseteq A(D)$ for any $e, e' \in M$ (possibly $e= e'$).
This proves (ii) for the case of $m\ge3$.
This completes the proof for Case 2, and the proof of Lemma \ref{MX jiegou} as well.
\end{proof}

\begin{Theorem}\label{min-semi matching}
Let $D$ be a digraph, $S\subseteq V (D)$, $H=D\langle S\rangle$, $M$ be a maximum matching of $H$ with $|M| = m>0$ and $S-V(M) =X$.
Suppose that 
if $|X|=|\{x,x'\}|=2$ and $M_x=\frac{m}{2}$ or $M_{x'}=\frac{m}{2}$, where $M_x=\{[u,v]\in M:[u,x]\mbox{ or }[v,x]\in A(H)\}$ and $M_{x'}=\{[u,v]\in M:[u,x']\mbox{ or } [v,x']\in A(H)\}$, then $D$ is $S$-strictly strong.
 If $\delta^0(H)\geq m$,
then $S$ is closed-trailable.
\end{Theorem}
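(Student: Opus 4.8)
The plan is to split into two cases according to the structural dichotomy established in Lemma \ref{MX jiegou}. First, consider the exceptional configuration: $|X|=|\{x,x'\}|=2$ and $M_x=\frac{m}{2}$ or $M_{x'}=\frac{m}{2}$. In this case the hypothesis gives that $D$ is $S$-strictly strong, so there is a closed ditrail $Q$ through two nonadjacent vertices of $S$; moreover Lemma \ref{MX jiegou} tells us that $D\langle V(M_x)\cup\{x\}\rangle\cong D\langle V(M_{x'})\cup\{x'\}\rangle\cong K^*_{m+1}$ and there are no arcs between these two complete digraphs. Thus $D\langle S\rangle$ is (a subdigraph of) the disjoint union of two copies of $K^*_{m+1}$, so no closed ditrail can contain vertices from both; since $K^*_{m+1}$ is supereulerian, each half is individually closed-trailable, and the $S$-strictly-strong hypothesis forces the two nonadjacent vertices of the assumed closed ditrail to both lie in the same $K^*_{m+1}$ — but any two vertices of $K^*_{m+1}$ are adjacent, a contradiction unless $S$ lies entirely in one half, in which case $S$ is closed-trailable. (I would need to check carefully that the hypothesis is not simply vacuously unsatisfiable; the point of phrasing it this way is that when $D\langle S\rangle$ splits into two cliques, $S$-strict-strongness can only hold if $S$ is contained in one of them, and then we win.)

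Second, the main case: either $|X|\le 1$, or $|X|\ge 3$, or $|X|=2$ with $|M_x|,|M_{x'}|>\frac m2$. Here Lemma \ref{MX jiegou} gives us the full structural description: for every $e=[u,v]\in M$ there is a distinguished endpoint $v(e)$ with $\{(v(e),x),(x,v(e))\}\subseteq A(D)$ for all $x\in X$, and the "bad" endpoints $U:=\{u(e):e\in M\}$ form an independent set in $H$ with $d_H^\pm(u(e))=m$ and $\{(u(e),v(e')),(v(e'),u(e))\}\subseteq A(D)$ for all $e,e'\in M$. This says: the $m$ vertices $\{v(e)\}$ together with $X$ induce a semicomplete (in fact essentially complete) digraph, and each $u(e)$ is fully bidirectionally joined to all of the $v(e')$'s (but to nothing in $X$ and nothing among the other $u$'s). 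From this I would build the closed ditrail explicitly. Take all $m$ arcs of $M$, the arcs realizing the bidirectional $U$–$\{v(e)\}$ connections, and the arcs from $X$ to the $v(e)$'s: the idea is to route a closed walk that starts at some $v(e_1)$, traverses into $u(e_1)$ via the matching arc $e_1$ (oriented appropriately), hops to $v(e_2)$, uses $e_2$, and so on cyclically through all matching arcs, then threads each vertex of $X$ in by a detour $v(e)\to x\to v(e)$ — no wait, that repeats an arc; instead route $x$ via $\cdots v(e_i)\,x\,v(e_{i+1})\cdots$ using arcs $(v(e_i),x),(x,v(e_{i+1}))$, which are all available since $v(e_i)$ and $v(e_{i+1})$ are adjacent to every $x\in X$. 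One must be careful that no arc is used twice and that $m\ge 2$ (the cases $m=1$ and $|X|\le 1$ being easy to handle directly, as the lemma's proof already notes $m=1$ is trivial).

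The construction I have in mind for the main case is: list $M=\{e_1,\dots,e_m\}$ with $e_i=[u_i,v_i]$ where $u_i=u(e_i)$, $v_i=v(e_i)$; list $X=\{x_1,\dots,x_k\}$ (with $k=|X|$, possibly $0$ or $1$). Orient matching arc $e_i$ as $u_i\to v_i$ if it is bidirectional, or in whatever single direction it has — but actually I can always go $v_i \to u_i$ along $e_i$ only if that arc exists; to be safe use the guaranteed bidirectional arcs $(u_i,v_{i+1}),(v_{i+1},u_i)$ and $(u_i,v_i)$ — hmm, is $(u_i,v_i)$ guaranteed both ways? Lemma \ref{MX jiegou}(ii) gives $\{(u(e),v(e')),(v(e'),u(e))\}\subseteq A(D)$ for \emph{all} $e,e'$ including $e=e'$, so yes, $(u_i,v_i)$ and $(v_i,u_i)$ are both arcs. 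So the closed ditrail is: $v_1\to u_1\to v_2\to u_2\to\cdots\to v_m\to u_m\to v_1$, using the $2m$ distinct arcs $\{(v_i,u_i)\}\cup\{(u_i,v_{i+1})\}$ (indices mod $m$); this already visits every matched vertex. Then splice in the $x_j$'s: replace the single arc $u_m\to v_1$ of this closed ditrail by the detour $u_m\to v_1$ — no, I need to insert $x_j$'s along the walk. Better: between consecutive $v_i$-visits the walk passes $\cdots u_{i-1}\to v_i\to u_i\cdots$; I can instead go $u_{i-1}\to x_j \to v_i$ for some $x_j$? But $u_{i-1}$ is not adjacent to $x_j$. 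Instead, detour at a $v$-to-$v$ transition: actually the walk has no direct $v_i\to v_{i'}$ step. So insert the $x_j$'s as extra loops hanging off a single vertex $v_1$: go $v_1\to x_1\to v_2\to u_2\to\cdots$, i.e., reroute the first step $v_1\to u_1$... no. Cleanest: the walk visits $v_1$; from $v_1$ take $v_1\to x_1\to v_1$? repeats nothing? $(v_1,x_1)$ and $(x_1,v_1)$ are distinct arcs, so $v_1\,x_1\,v_1$ is a legitimate closed ditrail segment of length 2. So prepend to the cycle above, at vertex $v_1$, the excursions $v_1\,x_1\,v_1\,x_2\,v_1\cdots v_1\,x_k\,v_1$ — all arcs $(v_1,x_j),(x_j,v_1)$ are distinct and available. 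Concatenating gives a closed ditrail through all of $V(M)\cup X = S$. The main obstacle, and the part requiring genuine care, is exactly this case-free construction: verifying arc-disjointness (no arc repeated — which holds because matching arcs, the $u_i$–$v_{i+1}$ arcs, and the $v_1$–$x_j$ arcs are pairwise of different "types" and within each type use distinct vertex pairs) and handling the degenerate small cases $m\le 2$ and $k\le 1$ separately, plus double-checking that the exceptional-case hypothesis genuinely rescues us rather than being an empty assumption.
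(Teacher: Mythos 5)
Your main-case construction (the Hamiltonian-type dicycle $v_1u_1v_2u_2\cdots v_mu_mv_1$ on $V(M)$ built from the bidirectional $u(e)$--$v(e')$ arcs of Lemma \ref{MX jiegou}(ii), with the excursions $v_1x_jv_1$ spliced in at $v_1$ via Lemma \ref{MX jiegou}(i)) is correct and arc-disjoint as you argue; it is in fact more direct than the paper, which instead takes a closed ditrail maximizing $|V(Q)\cap S|$ and derives contradictions by augmenting it. The subcase $|X|\le 1$ is not quite as ``easy'' as you claim --- Lemma \ref{MX jiegou} does not apply there --- but the intended route (show $H$ is strong from $\delta^0(H)\ge m$ and $|S|\le 2m+1$, then invoke Theorem \ref{BaMa14 2n-3} applied to $H$) is short, so this is only a presentational omission.

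The genuine gap is in the exceptional case $|X|=2$ with $|M_x|=\frac m2$ or $|M_{x'}|=\frac m2$. You assert that since $H$ splits into two vertex-disjoint copies of $K^*_{m+1}$ with no arcs between them, ``no closed ditrail can contain vertices from both,'' and you then try to force the two nonadjacent vertices onto the same side, reaching a contradiction and concluding that $S$ must lie in one half. This is wrong on two counts. First, a closed ditrail witnessing $S$-strict-strongness lives in $D$, not in $H$: it may pass through vertices of $V(D)-S$, and these can bridge the two components of $H$. Second, $S=V(M)\cup X$ always meets both halves in this configuration (each half is $V(M_x)\cup\{x\}$, resp.\ $V(M_{x'})\cup\{x'\}$), so the escape clause ``$S$ lies entirely in one half'' never occurs, and your argument terminates in a false contradiction rather than a proof. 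The correct use of the hypothesis is the opposite of what you propose: since each half is complete, the two nonadjacent vertices $u,v$ on the guaranteed closed ditrail $Q$ must lie in \emph{different} halves; one then extracts the sub-ditrails of $Q$ that enter and leave each half and replaces the portion inside each half by a spanning ditrail of that $K^*_{m+1}$ (with prescribed endpoints), obtaining a closed ditrail through all of $S$. Without this splicing step the exceptional case is not covered.
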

\begin{proof}
If $|X| = 0$ or $|X| =  1$, that is $|S| = 2m$ or $|S| = 2m + 1$, then by $\delta^0(H)\geq m$, we have that $H$ is strong.
In fact, if $H$ is not strong, then there exist two vertices $u,v\in V(H)$ such that there is no dipath from $u$
to $v$ in $H$.
Then $(u,v)\not\in A(H)$ and $d^+_H(u)+ d^-_H(v)\leq |S|-2$.
By $\delta^0(H)\geq m$, we get $d^+_H(u)\geq m$ and $d^-_H(v)\geq m$.
Then $2m\le d^+_H(u)+ d^-_H(v)\le |S|-2$,
a contradiction.
Thus $H$ is strong.
If $H$ contains no nonadjacent vertices, that is, $H$ is a semicomplete digraph, then by
by Theorem 1.5.3 of \cite{BaGu09},
$H$ is hamiltonian and so supereulerian.
Thus $S$ is closed-trailable in $D$.
Therefore $H$ contains nonadjacent vertices.
Then by
Theorem \ref{BaMa14 2n-3} with the fact that $\delta^0(H)\geq m$, $H$ is supereulerian, and so $S$ is closed-trailable in $D$ and we are done.
Thus we assume that $|S|\geq2m + 2$ in the following.

If $|X|=\{x,x'\}=2$ and $M_x=\frac{m}{2}$ or $M_{x'}=\frac{m}{2}$, where $M_x=\{[u,v]\in M:[u,x]\mbox{ or }[v,x]\in A(H)\}$ and $M_{x'}=\{[u,v]\in M:[u,x']\mbox{ or } [v,x']\in A(H)\}$, then by
Lemma \ref{MX jiegou}, we have $D\langle V(M_x) \cup \{x\}\rangle\cong D\langle V(M_{x'}) \cup \{x'\}\rangle\cong K^*_{m+1}$ and
$N_{V(M_x) \cup \{x\}}(V(M_{x'}) \cup \{x'\}) = \emptyset$.
Then $H$ is not connected and $H$ is not a semicomplete digraph.
If $S=V(D)$, then by $D$ is $S$-strictly strong, we have $D=H$ is connected,
a contradiction.
Thus $S\subsetneq V(D)$.
Since $D$ is $S$-strictly strong, then there exist
 two nonadjacent
vertices $u,v\in S$ such that $D$ contains a closed ditrail $Q$ through the vertices $u$ and $v$.
Assume, w.l.o.g., that $u\in V(M_x) \cup \{x\}$ and $v\in V(M_{x'}) \cup \{x'\}$.
Let $Q_{[x_h,x_{h'}]}$ be the sub-ditrail of $Q_{[u,v]}$ such that
$V(Q_{[x_h,x_{h'}]})\cap (V(M_x) \cup \{x\})=\{x_h\}$ and $V(Q_{[x_h,x_{h'}]})\cap (V(M_{x'}) \cup \{x'\})=\{x_{h'}\}$, and $Q_{[x_{l'},x_{l}]}$
be the sub-ditrail of $Q_{[v,u]}$ such that $V(Q_{[x_{l'},x_{l}]})\cap (V(M_x) \cup \{x\})=\{x_l\}$ and $V(Q_{[x_{l'},x_{l}]})\cap (V(M_{x'}) \cup \{x'\})=\{x_{l'}\}$.
Since $D\langle V(M_x) \cup \{x\}\rangle\cong D\langle V(M_{x'}) \cup \{x'\}\rangle\cong K^*_{m+1}$, there exist a spanning ditrail $T_{[x_l,x_{h}]}$ from $x_l$ to $x_h$ in $D\langle V(M_x) \cup \{x\}\rangle$ and a spanning ditrail $T'_{[x_{h'},x_{l'}]}$ from $x_{h'}$ to $x_{l'}$ in $D\langle V(M_{x'}) \cup \{x'\}\rangle$.
Then $Q_{[x_h,x_{h'}]}  T'_{[x_{h'},x_{l'}]}  Q_{[x_{l'},x_{l}]}   T_{[x_l,x_{h}]}$
is a closed ditrail in $D$ which
contains all the vertices of $S$.

Now we consider the remaining
case of this theorem.
Since $\delta^0(H)\geq m\geq 1$,
there exists a closed ditrail which contains the vertices of $S$ in $D$. Let $Q$ be a closed ditrail of $D$ with
\begin{equation} \label{many S k}
\begin{split}
|V(Q)\cap S|\mbox{ is maximized}.
\end{split}
\end{equation}

If $ S \subseteq V(Q)$, then $S$ is closed-trailable in $D$ and we are
done. So assume that $S-V(Q)\not=\emptyset$. Then we consider two cases.
 \vskip .2cm
  \noindent {\bf Case 1.} $X-V(Q)\not=\emptyset$.

Let $x\in X-V(Q)$.
If there exists $e =[u, v] \in M$ such that $u, v\in V(Q)$,
then by Lemma
\ref{MX jiegou}(i), there exists a vertex
$v(e) \in \{u, v\}$ such that $(x, v(e)), (v(e), x) \in A(H)$. Then $Q \cup
\{(x, v(e)), (v(e), x)\}$ is a closed ditrail $Q'$ of $D$ with
$|V(Q')\cap S| >|V(Q)\cap S|$, which contradicts (\ref{many S k}).
Thus for any $e =[u, v] \in M$, $|\{u, v\}\cap V(Q)|\leq 1$.
If $|\{u, v\}\cap V(Q)|= 1$ for some $[u, v] \in M$, w.l.o.g., $u\notin V(Q)$ and $v\in V(Q)$,
then by Lemma
\ref{MX jiegou}(ii), $(u, v), (v, u) \in A(H)$. But now $Q \cup
\{(u, v), (v, u)\}$ is a closed ditrail $Q'$ of $D$ with
$|V(Q')\cap S| >|V(Q)\cap S|$, which contradicts (\ref{many S k}).
Therefore $|\{u, v\}\cap V(Q)|= 0$ for any $e =[u, v] \in M$.
Thus $V(M)\cap V(Q)=\emptyset$ and $X\cap V(Q)\not=\emptyset$.
Pick $s\in X\cap V(Q)$, then by Lemma
\ref{MX jiegou}(i), there exists a vertex
$v(e) \in \{u, v\}$ such that $(s, v(e)), (v(e), s) \in A(H)$. Then $Q \cup
\{(s, v(e)), (v(e), s)\}$ is a closed ditrail $Q'$ of $D$ with
$|V(Q')\cap S| >|V(Q)\cap S|$, which contradicts (\ref{many S k}).
 \vskip .2cm
  \noindent {\bf Case 2.} $X\subseteq V(Q)$.

Let $v \in S-V(Q)$. Then $v \in V(M)$ and there exists an arc $e =
[u, v] \in M$. If $u \in V(Q)$, then by Lemma \ref{MX jiegou}(ii), $\{(u, v), (v, u)\} \subseteq A(H)$,
then we have a closed ditrail $Q'=Q \cup \{(u, v), (v, u)\}$ of $D$ with
$|V(Q')\cap S| >|V(Q)\cap S|$, which contradicts (\ref{many S k}). Thus $u \not \in V(Q)$.
Since $|X|\geq 2$, there exists a vertex $w \in X=S-V(M)$. Since $S-V(Q)\subseteq V(M)$, we have $ w \in V(Q)$. Then by Lemma \ref{MX jiegou}(i), there
exists a vertex $v(e) \in \{u, v\}$ such that $(w, v(e)), (v(e), w) \in A(D)$. Now $Q \cup \{(w, v(e)), (v(e), w)\}$ is a closed ditrail $Q'$ of $D$ with
$|V(Q')\cap S| >|V(Q)\cap S|$, which contradicts (\ref{many S k}). This proves Theorem
\ref{min-semi matching}.
 \end{proof}

 \begin{Corollary}
 Let $D$ be a digraph and $S$ be a subset of $V(D)$.
 If $D$ is $S$-strictly strong and
 $\delta^0(D\langle S\rangle)\geq\alpha'(D\langle S\rangle)>0$, then $S$ is closed-trailable in $D$.
\end{Corollary}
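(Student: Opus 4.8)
The plan is to read this corollary off directly from Theorem \ref{min-semi matching}, letting the matching number play the role of $m$. Set $H=D\langle S\rangle$ and let $M$ be a maximum matching of $H$; by the definition of the matching number, $|M|=\alpha'(D\langle S\rangle)$, and I write $m:=|M|$. The standing hypothesis $\delta^0(D\langle S\rangle)\geq\alpha'(D\langle S\rangle)>0$ is then precisely the statement that $m>0$ and $\delta^0(H)\geq m$, which are exactly the numerical hypotheses demanded by Theorem \ref{min-semi matching}.

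Next I would verify the one remaining, conditional, hypothesis of Theorem \ref{min-semi matching}: it requires that in the degenerate configuration where $X:=S-V(M)$ has exactly two vertices $x,x'$ and one of the sets $M_x,M_{x'}$ has size $m/2$, the digraph $D$ is $S$-strictly strong. In the corollary this is granted outright, with no side condition, so the conditional hypothesis is automatically met (indeed it is even over-satisfied, since the corollary assumes $S$-strictly strong in all cases, not just the degenerate one). Hence every hypothesis of Theorem \ref{min-semi matching} holds for this $m$, and the theorem yields that $S$ is closed-trailable in $D$, which is the assertion.

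So there is essentially no obstacle to overcome at the level of the corollary itself: the substance is already packaged inside Theorem \ref{min-semi matching} and its two supporting lemmas — Lemma \ref{not M}, which rules out a maximum matching once certain degree surpluses occur, and Lemma \ref{MX jiegou}, which pins down exactly how $M$ meets the unmatched set $X$. The only point worth pausing over is the bookkeeping: that "$\delta^0\geq\alpha'$" is equivalent to "$\delta^0(H)\geq m$ for a maximum matching of size $m$", which is immediate from the definition of $\alpha'$. For orientation, specializing to $S=V(D)$ recovers Theorem \ref{AlLa15}.
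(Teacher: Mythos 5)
Your derivation is correct and is exactly the intended one: the paper gives no separate proof of the corollary because it is an immediate specialization of Theorem \ref{min-semi matching} with $m=|M|=\alpha'(D\langle S\rangle)$, where $\alpha'(D\langle S\rangle)>0$ gives $m>0$, $\delta^0(D\langle S\rangle)\geq\alpha'(D\langle S\rangle)$ gives $\delta^0(H)\geq m$, and the blanket assumption that $D$ is $S$-strictly strong subsumes the theorem's conditional hypothesis for the degenerate $|X|=2$ configuration. Nothing is missing.
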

%
%
%






\end{document}